\documentclass[reqno]{amsart}
\usepackage{amssymb}
\usepackage{mathrsfs}
\usepackage{amsmath,amstext,amsfonts,verbatim}

\usepackage[latin1]{inputenc}

\usepackage{amsmath,amsthm,amssymb}

\usepackage{amsfonts}

\usepackage{amsmath,amssymb}
\usepackage{graphicx}
\usepackage{color}
\usepackage{indentfirst}

\setlength{\parskip}{2pt}
\setlength{\textwidth}{15.2cm}
\setlength{\oddsidemargin}{.5cm}
\setlength{\evensidemargin}{0.5cm}
\setlength{\textheight}{23cm}
\setlength{\topmargin}{-1cm}
\setlength{\footskip}{1.5cm}

\newtheorem{theorem}{Theorem}[section]
\newtheorem{lemma}[theorem]{Lemma}

\theoremstyle{definition}
\newtheorem{definition}[theorem]{Definition}

\newtheorem{prop}[theorem]{Proposition}
\newtheorem{Corollary}[theorem]{Corollary}
\newtheorem{fact}[theorem]{Fact}

\theoremstyle{remark}

\numberwithin{equation}{section}

\newcommand{\neweq}[1]{\begin{equation}\label{#1}}

\def\phi{\varphi}

\def\incep{\left\{\begin{array}{cl} }
 \def\termin{\end{array}\right. }
\def\2af{2^*_\alpha}

\begin{document}

\title [The systems with almost Banach mean equicontinuity]{\textbf{The systems with almost Banach mean equicontinuity for abelian group actions}}

\author{Bin Zhu}
\address{College of Mathematics and Statistics, Chongqing University, Chongqing 401331, China}
\email{13648338815@163.com}

\author{Xiaojun Huang$^*$}
\address{College of Mathematics and Statistics, Chongqing University, Chongqing 401331, China}
\thanks{$^*$ Corresponding author.}
\email{hxj@cqu.edu.cn}
\thanks{The research was supported by NSF of China (No. 11671057, No. 11471318) and
the Fundamental Research Funds for the Central Universities (No. 2018CDQYST0023).}

\author{Yuan Lian}
\address{College of Mathematics and Statistics, Chongqing University, Chongqing 401331, China}
\email{20140602035@cqu.edu.cn}

\keywords{Abelian group action, Banach mean equicontinuous, Banach mean density, independence set}

\subjclass[2010]{37A35, 37B40}

\date{}

\begin{abstract}
In this paper, we give the concept of Banach-mean equicontinuity and prove that three concepts, Bnanach-, Weyl- and Besicovitch-mean equicontinuity of a dynamic system with abelian group action are equivalent. Furthermore, we obtain that the topological entropy of a transitive, almost Banach-mean equicontinuous dynamical system with abelain group action is zero. As an application with our main result, we show that the topological entropy of the Banach-mean equicontinuous system under the action of an abelian groups is zero.
\end{abstract}

\maketitle

\section{introduction}

Ergodic theory and topological dynamics are two branches of the modern theory of dynamical systems. The first, thought not in its broadest definition, deals with groups action on a probability measure space in a measure-preserving way; the second deals with the action of groups on compact metric space as groups of homeomorphisms. In this paper, we discuss the problems under the framework of countable group actions on compact metric spaces which constitute fundamental objects of study in the field of dynamical systems.

It is well known that equicontinuous systems have simple dynamical behaviors. A dynamical system is called equicontinuous if the collection of maps defined by the action of the group is a uniformly equicontinuous family. The equicontinuous systems are dynamically the 'simplest' ones; in fact, there is a complete classification of equicontinuous minimal systems.

Mean equicontinuity has received interest in recent years due to connections with ergodic properties of measurable dynamical systems, i.e. dynamical systems equipped with an invariant probability measure. In particular, it has been shown that using a measure theoretic version of mean equicontinuity one can characterize when a measure-preserving system has discrete spectrum \cite{GR} and when the maximal equicontinuous factor is actually an isomorphism \cite{LTY1,BLM}.

Actually, the concept of mean equicontinuity comes in two variants, one known as Weyl-mean equicontinuity and the other as Besicovitch-mean equicontinuity. The concepts of Weyl- and Besi-covitch-mean equicontinuity were introduced in \cite{LTY1} for integer actions. In fact, in this case the notion of Besicovich-mean equicontinuity is immediately seen to be equivalence to the concept of mean Lyapunov-stability which was already introduced in 1951 by Fomin \cite{F} in the context of $\mathbb{Z}$-actions with discrete spectrum. Later, a first systematic treatment was carried out by Auslander \cite{A1}.

Answering an open question in \cite{S}, it was proved by Li, Tu and Ye in \cite{LTY1} that every invariant measure of a mean equicontinuous system of integer group action has discrete spectrum. Localizing the notion of mean equicontinuity, they introduced notions of almost mean equicontinuity and almost Weyl-mean equicontinuity. In \cite{LTY1} they proved that a system with the former property may have positive entropy and meanwhile a system with the latter property must have zero entropy.

Concerning alelian group action, mean equicontinuity and its relation to the spectral theory of dynamical systems (in particular, to discrete spectrum) has been studied by various groups \cite{GR,GRM,FGL}. In the minimal case and the action of abelian group, Fuhrmann, Gr\"{o}ger and Lenz \cite{FGL} concluded that mean equicontinuity is equivalent to discrete spectrum with continuous eigenfunctions.

Inspired by the previous papers, we will discuss the dynamical properties of countable abelian group action systems. In this paper, we introduce the concept of Banach-mean equicontinuity of groups action dynamical systems, which is broader than Weyl- and Besicovitch-mean equicontinuity, and not limited to the dynamical systems of amenable group actions. Moreover, we prove that the above three concepts are equivalent when the dynamic system is an abelian group action. Furthermore, we introduce the concept of almost Banach-mean equicontinuity of countable abelian group action system and obtain the following main result:

\begin{theorem}\label{maintheorem1}
Let $G$ be a countably infinite abelian group, $X$ be a compact metric space and the action $G\curvearrowright X$ be transitive. If the action $G\curvearrowright X$ is almost Banach-mean continuous, then
\begin{equation*}
  h_{{\rm {top}}} (X, G)=0.
\end{equation*}
\end{theorem}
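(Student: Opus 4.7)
The strategy is a contradiction argument combining the Kerr--Li theory of independence entropy pairs with a Baire category selection of a well-behaved base point. Suppose, for contradiction, that $h_{\mathrm{top}}(X,G) > 0$. First I would set up the selection step: by hypothesis the set $X_{\mathrm{eq}}$ of Banach-mean equicontinuity points is residual in $X$, and transitivity of the $G$-action guarantees that the set $X_{\mathrm{tr}}$ of transitive points is also residual. Baire category then yields a point $x_0 \in X_{\mathrm{eq}} \cap X_{\mathrm{tr}}$ that is simultaneously a Banach-mean equicontinuity point and a transitive point. This is the point at which the quantitative contradiction will be produced.

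Next, I would extract combinatorial information from positive entropy. Since $G$ is a countably infinite amenable group (being abelian) acting on $X$ with $h_{\mathrm{top}}(X,G)>0$, the Kerr--Li characterization of entropy via independence supplies a non-diagonal IE-pair $(y_1,y_2) \in X \times X$ with $y_1 \neq y_2$. Choosing disjoint open neighborhoods $U_1 \ni y_1$ and $U_2 \ni y_2$, one obtains a subset $F \subseteq G$ of positive upper Banach density such that $F$ is an independence set for $(U_1,U_2)$: for every $\sigma : F \to \{1,2\}$ there exists $z_\sigma \in X$ with $g \cdot z_\sigma \in U_{\sigma(g)}$ for all $g \in F$. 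I would fix two colorings $\sigma, \sigma'$ that disagree on a subset $F' \subseteq F$ of positive upper Banach density (e.g.\ $\sigma \equiv 1$ and $\sigma' \equiv 2$); this produces a pair $z_\sigma, z_{\sigma'}$ whose orbits are separated by the distance between $U_1$ and $U_2$ on a positively Banach-dense set.

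The crucial transfer step is to replace $z_\sigma, z_{\sigma'}$ by points close to $x_0$. Using density of $Gx_0$, approximate each $z_\sigma$ on a sufficiently long Følner window in $F$ by some translate $h_\sigma x_0$, then apply the abelian group element $h_\sigma^{-1}$ to rewrite the independence condition as a condition about orbits starting near $x_0$ itself. Abelianness is used here in an essential way: translation preserves upper Banach density, so the shifted set $h_\sigma^{-1} F$ still has positive upper Banach density. Doing this for both $\sigma$ and $\sigma'$ yields two points $\tilde z, \tilde z'$ arbitrarily close to $x_0$, whose orbits stay in $U_1, U_2$ in the prescribed pattern along a set of positive upper Banach density. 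The Banach-mean equicontinuity of $x_0$ then forces the Banach-mean distance between the orbits of $\tilde z$ and $\tilde z'$ to be arbitrarily small, while the positive Banach density of the coordinates at which these orbits are separated by $\mathrm{dist}(U_1,U_2)$ forces a fixed positive lower bound; this is the desired contradiction. The previously established equivalence of Banach-, Weyl-, and Besicovitch-mean equicontinuity in the abelian case may be invoked freely to choose whichever formulation makes the estimate cleanest.

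The main obstacle I expect is engineering the approximation in the transfer step so that $\tilde z$ and $\tilde z'$ are simultaneously close to $x_0$ and inherit the independence pattern on a set of positive \emph{Banach} (not merely positive, or merely density along a specific Følner sequence) density. This requires choosing the approximating translates $h_\sigma, h_{\sigma'}$ against the correct Følner exhaustion of $G$, controlling how much of $F$ is retained under translation and truncation, and verifying that the positive upper Banach density of the disagreement set survives this process. Once this bookkeeping is in place, the contradiction with the Banach-mean equicontinuity at $x_0$ is immediate, yielding $h_{\mathrm{top}}(X,G)=0$.
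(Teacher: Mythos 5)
Your high-level skeleton (Kerr--Li IE-pairs, a well-chosen transitive base point, abelianness to keep upper Banach density translation-invariant) matches the paper's, but the step you yourself flag as ``the main obstacle'' --- the transfer step --- is where the argument actually breaks, and the fix is not bookkeeping but a missing idea. As written, approximating $z_\sigma$ and $z_{\sigma'}$ by orbit points $h_\sigma x_0$, $h_{\sigma'}x_0$ produces two points close to $z_\sigma$ and $z_{\sigma'}$, not to $x_0$; and pulling back by $h_\sigma^{-1}$ returns you to the single point $x_0$ itself, so you never obtain \emph{two distinct} points inside $B(x_0,\delta)$ whose orbits are $2\delta_0$-separated on a set of positive Banach density --- which is exactly what contradicting mean equicontinuity \emph{at} $x_0$ requires. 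There is no reason for $h_\sigma^{-1}h_{\sigma'}x_0$, or any other point you can manufacture this way, to lie near $x_0$.

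The paper closes this gap with two ingredients absent from your proposal. First, $x_0$ is chosen to be \emph{recurrent} as well as transitive (using residuality of ${\rm Re}(X,G)$ for transitive actions on spaces without isolated points), so the return-time set $G(x_0,B(x_0,\delta'))$ is infinite and the candidate pair is $l_1x_0,\,l_2x_0$ with $l_1,l_2$ return times --- these are automatically in $B(x_0,\delta')$. Second, and this is the real content, one must select $l_1,l_2$ from the infinitely many return times so that ${\rm BD}^*(l_1^{-1}J\cap l_2^{-1}J)\ge \tfrac12\,{\rm BD}^*(J)^2>0$, where $J$ is the independence set; the paper proves this via an amenable-group Furstenberg correspondence principle (transferring ${\rm BD}^*$ to an invariant measure on a shift space and applying a finite-intersection lemma for sets of equal measure). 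After a further disjointification (a maximal $H\subseteq l_1^{-1}J\cap l_2^{-1}J$ with $l_1H\cap l_2H=\emptyset$, still of Banach density at least $\tfrac16\,{\rm BD}^*(J)^2$), the independence property yields, for each finite window, a single point of the dense orbit $Gx_0$ realizing the pattern ``$U_1$ along $l_1H$, $U_2$ along $l_2H$'', and commutativity converts this into $s(l_1x_0)\in U_1$, $s(l_2x_0)\in U_2$ for $s$ ranging over a set of positive density along a suitable F{\o}lner sequence. Without the recurrence-based selection and the correspondence-principle lemma, your contradiction cannot be assembled, so the proposal has a genuine gap rather than a routine engineering issue.
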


As an application with our main result, we prove that the topological entropy of the Banach-mean equicontinuous system under the action of an abelian groups is zero.

\begin{theorem}
Let $G$ be a countable abelian group, $X$ be a compact metric space and $G\curvearrowright X$ be a continuous action. If $G\curvearrowright X$ is Banach-mean equicontinuous, then
\begin{equation*}
 h_{{\rm {top}}}(X, G)=0
\end{equation*}
\end{theorem}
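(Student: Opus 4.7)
The plan is to deduce this corollary from Theorem~\ref{maintheorem1} by reducing to transitive subsystems via the variational principle. First I would record two essentially formal observations: (i) Banach-mean equicontinuity of $(X,G)$ passes to any closed $G$-invariant subset $Y\subseteq X$, because the Banach averages and Banach densities entering the definition depend only on points of $Y$ and on the restricted action; and (ii) Banach-mean equicontinuity trivially implies almost Banach-mean equicontinuity, since by hypothesis every $x\in X$ is already a Banach-mean equicontinuity point.

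Since $G$ is countable abelian and hence amenable, the topological entropy satisfies the variational principle
\[
h_{\rm top}(X,G) \;=\; \sup_{\mu}\, h_\mu(X,G),
\]
the supremum ranging over ergodic $G$-invariant Borel probability measures $\mu$ on $X$. For such a $\mu$ set $Y := \supp(\mu)$. Then $Y$ is closed and $G$-invariant, and Lindenstrauss' pointwise ergodic theorem for countable amenable group actions furnishes a point $x\in Y$ whose $G$-orbit is dense in $Y$, so $(Y,G)$ is topologically transitive. Combined with (i)--(ii), $(Y,G)$ is transitive and almost Banach-mean equicontinuous, hence Theorem~\ref{maintheorem1} yields $h_{\rm top}(Y,G)=0$. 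Applying the variational principle in the opposite direction to $(Y,G)$ gives $h_\mu(Y,G)=0$, whence $h_\mu(X,G)=0$. Taking the supremum over ergodic $\mu$ concludes that $h_{\rm top}(X,G)=0$.

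The main point requiring care is observation (i): one must trace the precise definition of Banach-mean equicontinuity used in the paper, phrased in terms of Banach averages along F\o lner-type sequences (equivalently, Banach densities of return sets), and verify that every ingredient restricts cleanly to $(Y,G)$. This is intuitive but not entirely automatic, because Banach densities can in principle be sensitive to the ambient group and the orbit structure. A secondary concern is the appeal to the variational principle and to the pointwise ergodic theorem for general countable amenable groups; these are classical results (due respectively to Ornstein--Weiss and to Lindenstrauss) and should be invoked by citation rather than rederived.
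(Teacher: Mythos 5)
Your overall strategy coincides with the paper's: fix an ergodic $G$-invariant measure $\mu$, restrict to $Y=\supp(\mu)$, observe that $(Y,G)$ is transitive and inherits (almost) Banach-mean equicontinuity, apply Theorem \ref{maintheorem1} to conclude $h_{{\rm {top}}}(Y,G)=0$, and finish with the variational principle (Theorem \ref{VPTE0001}) in both directions. Your observations (i) and (ii) are correct and unproblematic. One stylistic remark: invoking the pointwise ergodic theorem to produce a dense orbit in $Y$ is heavier machinery than needed. Since every nonempty open $U\subseteq Y$ has $\mu(U)>0$ (Fact \ref{supp0001}), ergodicity via Proposition \ref{jjj}(3) directly yields some $s\in G$ with $U\cap sV\neq\emptyset$ for all nonempty open $U,V\subseteq Y$; this is the paper's more elementary route to transitivity of $(Y,G)$.

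The one genuine gap is that you apply Theorem \ref{maintheorem1} to $(Y,G)$ without addressing isolated points. As printed, Theorem \ref{maintheorem1} carries no such hypothesis, but its proof in the paper reduces to Theorem \ref{hto.}, which explicitly assumes the space has no isolated points (it needs Proposition \ref{e} to produce a point that is simultaneously transitive and recurrent). So the result you are invoking is only actually established for spaces without isolated points, and $\supp(\mu)$ can certainly have them. The paper therefore splits into two cases: if $Y$ has no isolated points, argue as you do; if $Y$ has an isolated point $x_0$, then $\{x_0\}$ is open in $Y$, so $\mu(\{x_0\})>0$, and since the orbit $Gx_0$ is dense in $Y$ and consists of points of equal positive measure, $Gx_0$ and hence $Y$ is finite, giving $h_{{\rm {top}}}(Y,G)=0$ directly. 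You should add this case distinction (or independently verify that Theorem \ref{maintheorem1} holds without the no-isolated-points assumption); with it, your argument matches the paper's.
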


The paper is organized as follows. We begin in Section 2 by recalling some basic notations, definitions and results of groups action system. In Section 3 we relate the concept and basic propositions of amenable group. Section 4 is devoted to the concepts of Banach- Besicovitch- and Weyl-mean equicontinuity  for amenable group actions. In this section we prove that the above three concepts are equivalent when the dynamic system is an abelian group action. In Section 5 we introduce the concept of Wely-mean sensitivity of amenable group action system. In this section, we obtain a dichotomy result related to Wely-mean equicontinuity and Weyl-mean sensitivity when a dynamical system is transitive. In Section 6, we give the proof of our main resluts.  Finally, in Section 7 we apply our main result to prove the topological entropy of the Banach-mean equicontinuous system under the action of an abelian groups is zero.

\section{Preliminaries}

In this section, we recall some basic notations, definitions, and results.
We refer the reader to the textbook \cite{KL} for information on group action.

By an action of the group $G$ on a set $X$ we mean a map $\alpha: G\times X\rightarrow X$ such that, writing the first argument as a subscript, $\alpha_{s}(\alpha_{t}(x)) = \alpha_{st} (x)$ and $\alpha_{e}(x) =x$ for all $x\in X$ and $s, t\in G$. Most of the time we will write the action as $G\curvearrowright X$ with the image of a pair $(s,x)$ written as $sx$.


\begin{definition}
The action $G\curvearrowright X$ is {\it {(topologically) transitive}} if for all nonempty open sets $U, V\subset X $ there exists an $s\in G$ such that $sU\cap V\not= \emptyset$. The point $x\in X$ is transitive if $\overline{G x}=X$. Denote by Tran$(X,G)$ the set of all transitive points.
\end{definition}

The following proposition in \cite{KL} suggested that, when $X$ is metrizable, transitivity can be thought of as a generic version of minimality in the sense of
Baire category.

\begin{prop}(\cite[\textit{Proposition 7.9}]{KL}) \label{d}
Suppose that $X$ is metrizable. Then the following are equivalent:
\begin{enumerate}
  \item the action $G\curvearrowright X$ is transitive,
  \item there is a dense orbit,
  \item the set of points in $X$ with dense orbit is a dense $G_{\delta}$.
\end{enumerate}
\end{prop}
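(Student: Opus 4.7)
The plan is to establish the equivalence by the cyclic chain $(3)\Rightarrow(2)\Rightarrow(1)\Rightarrow(3)$. Throughout I treat $X$ as a separable metric space on which $G$ acts by homeomorphisms (the standing framework of the paper). The implication $(3)\Rightarrow(2)$ is immediate, since a dense $G_\delta$ set is in particular nonempty. For $(2)\Rightarrow(1)$, I would take a point $x$ with $\overline{Gx}=X$ and arbitrary nonempty open sets $U, V \subset X$; choosing $g, h \in G$ with $gx \in U$ and $hx \in V$, and setting $s = hg^{-1}$, I get $s(gx) = hx \in sU \cap V$, so $sU \cap V \neq \emptyset$, which is exactly transitivity.

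The core of the argument is $(1)\Rightarrow(3)$. I would fix a countable base $\{U_n\}_{n \in \mathbb{N}}$ of $X$ and observe that a point $x$ has dense orbit if and only if its orbit meets every basic open $U_n$, if and only if $x \in W_n := \bigcup_{g \in G} g^{-1}(U_n)$ for every $n$. Each $W_n$ is open, being a union of open sets (since every $g$ acts as a homeomorphism, $g^{-1}(U_n)$ is open). Transitivity then forces each $W_n$ to be dense: given any nonempty open $V$, there is $s \in G$ with $sV \cap U_n \neq \emptyset$, equivalently $V \cap s^{-1}U_n \neq \emptyset$, so $V$ meets $W_n$. The set of transitive points therefore equals $\bigcap_n W_n$, a countable intersection of dense open subsets of a Baire space; the Baire category theorem then yields that this intersection is a dense $G_\delta$.

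The main obstacle, such as it is, lies less in any single estimate than in assembling the structural ingredients required for $(1)\Rightarrow(3)$: existence of a countable base (ensured because a compact metric space is second countable), continuity of every $g \in G$ (so that preimages of open sets are open), and the Baire category theorem (which applies since $X$ is a compact, hence complete, metric space). Once these are in place, the whole argument reduces to bookkeeping with the basic open sets $U_n$ and the identity $\{x : \overline{Gx} = X\} = \bigcap_n \bigcup_{g \in G} g^{-1}(U_n)$.
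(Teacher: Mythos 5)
Your argument is correct: the chain $(3)\Rightarrow(2)\Rightarrow(1)\Rightarrow(3)$ is complete, and the Baire-category step is exactly the standard proof (one should just take the countable base to consist of nonempty sets so that each $W_n$ is genuinely dense, and note that compactness of $X$ — a standing assumption of the paper — supplies both second countability and completeness). The paper itself gives no proof, citing \cite[Proposition 7.9]{KL}, and your argument is essentially the one found there, so there is nothing further to compare.
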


\begin{definition}
A point $x\in X$ is {\it {recurrent}} if for every neighbourhood $U$ of $x$ the
set $\{s\in G: sx\in U\}$ is infinite. Denote by Re$(X,G)$  the set of all recurrent points.
\end{definition}

\begin{prop}(\cite[\textit{Proposition 7.11}]{KL})\label{e}
Suppose that the action $G\curvearrowright X$ is transitive and that $X$ is
metrizable and has no isolated points. Then the set of recurrent points in $X$ is a
dense $G_{\delta}$.
\end{prop}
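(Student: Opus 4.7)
The plan is to prove the two conclusions separately: that $\mathrm{Re}(X,G)$ is $G_\delta$ (a purely topological statement, once recurrence is reformulated in metric terms) and that it is dense (which will follow from Proposition \ref{d} via the inclusion $\mathrm{Tran}(X,G)\subseteq \mathrm{Re}(X,G)$).

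For the $G_\delta$ claim, I would fix any compatible metric $d$ on $X$ and reformulate the definition of recurrence as follows: $x$ is recurrent if and only if for every $k\in\mathbb{N}$ and every finite set $F\subset G$ there exists $s\in G\setminus F$ with $d(sx,x)<1/k$. Indeed, ``infinitely many $s$ with $sx\in U$'' is the same as ``for every finite $F$, some $s\in G\setminus F$ satisfies $sx\in U$,'' applied to the basis of balls $U=B(x,1/k)$. For each pair $(k,F)$ the set
$$U_{k,F}\;=\;\bigcup_{s\in G\setminus F}\{x\in X:\,d(sx,x)<1/k\}$$
is open, since each summand is the preimage of $[0,1/k)$ under the continuous map $x\mapsto d(sx,x)$. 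Because $G$ is countable, the collection of finite subsets of $G$ is countable as well, and
$$\mathrm{Re}(X,G)\;=\;\bigcap_{k\in\mathbb{N}}\,\bigcap_{F\subset G\text{ finite}} U_{k,F}$$
is a countable intersection of open sets.

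For density, I would prove the inclusion $\mathrm{Tran}(X,G)\subseteq \mathrm{Re}(X,G)$ and then invoke Proposition \ref{d}. Suppose $\overline{Gx}=X$ and let $U$ be any open neighborhood of $x$. Since $X$ has no isolated points, $U$ is infinite. Density of $Gx$ in $X$ then forces $Gx\cap U$ to be infinite, for otherwise $U\setminus(Gx\cap U)$ would be a nonempty open set disjoint from $Gx$, contradicting density. Infinitely many distinct orbit points in $U$ produce infinitely many $s\in G$ with $sx\in U$, so $x$ is recurrent; combined with Proposition \ref{d}, this gives density. The one step that requires care is the initial metric reformulation of recurrence: the naive ``countable basis'' version, which reads ``whenever $x\in V_n$ there are infinitely many $s$ with $sx\in V_n$,'' does not directly exhibit $\mathrm{Re}(X,G)$ as a countable intersection of open sets, because the condition ``$x\in V_n$'' couples a closed-type hypothesis to the conclusion. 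Routing through metric balls (rather than an arbitrary basis) removes this obstacle and makes the $G_\delta$ structure transparent.
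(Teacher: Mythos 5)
Your argument is correct. Note, however, that the paper offers no proof of this statement to compare against: it is quoted verbatim from \cite{KL} (Proposition 7.11) and used as a black box, so the only meaningful check is internal consistency of your write-up. Both halves hold up: the reformulation of recurrence via metric balls and finite exceptional sets $F$ is an honest equivalence (if $\{s: d(sx,x)<1/k\}$ were finite, taking $F$ equal to that set would violate your condition, which gives the nontrivial direction), each $U_{k,F}$ is open because every group element acts by a homeomorphism, and the intersection is countable because $G$ is countable --- an assumption not written into the statement itself but part of the paper's standing framework, and worth flagging since your $G_\delta$ argument genuinely uses it. The density half correctly combines the absence of isolated points (so every nonempty open set is infinite, hence $Gx\cap U$ must be infinite for a dense orbit) with Proposition \ref{d} to get $\mathrm{Tran}(X,G)\subseteq \mathrm{Re}(X,G)$ dense. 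Your closing remark about why one should index over metric balls rather than an arbitrary countable basis is a genuine and correctly diagnosed subtlety; the resulting proof is the standard one and is essentially the argument one finds in \cite{KL}.
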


\begin{definition}\label{indef01}
Let $X$ be a set. A collection $\{(A_{i, 1}, \ldots, A_{i, k}) : i\in I\}$ of $k$-tuples of subsets of $X$ is said to be {\it {independent}} if
$\bigcap_{i\in F} A_{i, \omega(i)}\not=\emptyset$ for every nonempty finite set $F\subseteq I$ and $\omega\in \{1, \cdots, k\}^F$.
\end{definition}

\begin{definition}\label{indef02}
Let $G\curvearrowright X$ be an action and $\mathbf{A}=(A_{1}, \ldots, A_{k})$ a tuple of subsets of $X$. We say that a set $J\subseteq G$ is an {\it {independence set}} for $\mathbf{A}$ if the collection $\{(s^{-1}A_{1},\ldots , s^{-1}A_{k}): s\in J\}$ is independent.
\end{definition}

\begin{definition}\label{ubd}
Let $G$ be a group. Denote by Fin$(G)$ the family
of all non-empty finite subsets of $G$. Let $E\subseteq G$ be a subset of $G$.
The {\it{upper Banach density}} of $E$ is defined as
\begin{equation*}
 {\rm {BD}}^{*}(E)=\inf\left\{\sup_{g\in G}\frac{|E\cap Fg|}{|Fg|}:F\in {\rm{Fin}}(G)\right\}.
 \end{equation*}
The {\it {lower Banach density}} of $E$ is given by ${\rm {BD}}_{*}(E)=1-{\rm {BD}}^{*}(G\backslash E)$.

Clearly one has ${\rm {BD}}_{*}(E)\leq {\rm {BD}}^{*}(E)$. If ${\rm {BD}}_{*}(E)= {\rm {BD}}^{*}(E)$, then we say that there exists the {\it {Banach density}} of $E$ and denote it by BD$(E)$.
\end{definition}

From the above definition it is easy to see that the Banach upper density has right shift invariant property. For the complete, we give a proof here.

\begin{prop}\label{right}
 ${\rm{BD}}^*(E s)={\rm {BD}}^*(E)$ for any $s\in G$ and $E$ subset of G.
\end{prop}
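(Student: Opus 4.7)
The plan is to unwind the definition and exploit the fact that right multiplication is a bijection of $G$. Concretely, I want to show that for every fixed $F \in \mathrm{Fin}(G)$, the inner quantity $\sup_{g \in G} |Es \cap Fg|/|Fg|$ is unchanged when $E$ is replaced by $Es$; then the infimum over $F$ will give the equality.

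The key algebraic step is the identity $Es \cap Fg = (E \cap Fgs^{-1})s$, so $|Es \cap Fg| = |E \cap Fgs^{-1}|$. Also $|Fg| = |F| = |Fgs^{-1}|$ since right translation in a group is a bijection. Therefore
\begin{equation*}
\frac{|Es \cap Fg|}{|Fg|} = \frac{|E \cap Fgs^{-1}|}{|Fgs^{-1}|}.
\end{equation*}
As $g$ ranges over all of $G$, so does $g' := gs^{-1}$, and hence
\begin{equation*}
\sup_{g \in G}\frac{|Es \cap Fg|}{|Fg|} = \sup_{g' \in G}\frac{|E \cap Fg'|}{|Fg'|}.
\end{equation*}
Taking the infimum over $F \in \mathrm{Fin}(G)$ yields $\mathrm{BD}^*(Es) = \mathrm{BD}^*(E)$.

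There is essentially no obstacle here, since the statement is really just a bookkeeping consequence of the fact that the family $\{Fg : g \in G\}$ is invariant under right translation by $s$. The only thing one needs to be mildly careful about is that in the definition the supremum is taken over \emph{all} of $G$ (not over a net depending on $F$), which is precisely what allows the change of variables $g' = gs^{-1}$ to be a bijection of the indexing set. No amenability, no Følner sequence, and no further properties of $G$ are used.
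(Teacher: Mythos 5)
Your proof is correct and follows essentially the same route as the paper: both rest on the identity $|Es\cap Fg|=|E\cap Fgs^{-1}|$ together with the change of variables $g\mapsto gs^{-1}$ in the supremum. The only cosmetic difference is that you establish equality of the inner suprema for each fixed $F$ directly, whereas the paper argues one inequality and invokes symmetry.
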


\begin{proof}
By the symmetry of the pair sets $E s$ and $E$, it is suffice to prove that ${\rm{BD}}^*(E s)\leq{\rm {BD}}^*(E)$. Let $F$ be any nonempty finite subset
of $G$. From the definition of Banach upper density of $E s$, one has
\begin{equation*}
   {\rm{BD}}^*(E s) \leq \sup_{g\in G}\frac{|E s\cap F g|}{|F g|}=\sup_{g\in G}\frac{|E\cap F gs^{-1}|}{|F gs^{-1}|}
    =\sup_{t\in G}\frac{|E\cap F t|}{|F t|}.
\end{equation*}
The arbitrary of $F$ implies that ${\rm{BD}}^*(E s)\leq{\rm {BD}}^*(E)$. Hence the proposition is obtained.
\end{proof}

It is not difficult to observe the following result.

\begin{lemma}\label{lem3}

Let $F,F_{1},F_{2}$ be subsets of $G$ and $s\in G$.
\begin{enumerate}
\item If $F_{1}$ has Banach density one and $F_{1}\subseteq F_{2}$, then so dose $F_{2}$.
\item If $F$ has Banach density one, then $G\backslash F$ is a set of Banach density zero.
\item If $F_{1}$ and $F_{2}$ have Banach density one, then so does $F_{1}\cap F_{2}$.
\item If $F$ has Banach density one, then  so does $Fs$.
\end{enumerate}
\end{lemma}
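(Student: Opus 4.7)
The plan is to deduce each part from the definition, Proposition \ref{right}, and the elementary complementation identities ${\rm BD}^*(G\backslash E) = 1 - {\rm BD}_*(E)$ and ${\rm BD}_*(G\backslash E) = 1 - {\rm BD}^*(E)$ that are built into the definition of lower Banach density.

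Parts (1), (2), and (4) are routine. For (1), monotonicity of both ${\rm BD}^*$ and ${\rm BD}_*$ under set inclusion follows from the pointwise bounds $|F_1 \cap Fg| \leq |F_2 \cap Fg|$ and $|(G\backslash F_2) \cap Fg| \leq |(G\backslash F_1) \cap Fg|$, valid for every finite $F \subseteq G$ and every $g \in G$; taking $\sup_g$ and then $\inf_F$ transports each inequality to the corresponding density. Since ${\rm BD}(F_1)=1$ forces ${\rm BD}^*(G\backslash F_1)=0$, these bounds propagate to ${\rm BD}(F_2)=1$. For (2), the two complementation identities yield ${\rm BD}_*(G\backslash F) = {\rm BD}^*(G\backslash F) = 0$ as soon as ${\rm BD}(F)=1$. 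For (4), Proposition \ref{right} applied to both $F$ and $G\backslash F$, together with the identity $(G\backslash F)s = G\backslash (Fs)$, transfers both the upper and the lower density from $F$ to $Fs$.

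The substantive case is (3). By de Morgan,
\[
G\backslash(F_1\cap F_2) \;=\; (G\backslash F_1) \cup (G\backslash F_2),
\]
and by (2) both summands have upper Banach density zero. So it suffices to establish the finite subadditivity
\[
{\rm BD}^*(A \cup B) \;\leq\; {\rm BD}^*(A) + {\rm BD}^*(B).
\]
Once this is in hand, ${\rm BD}^*(G\backslash(F_1\cap F_2)) = 0$, which gives ${\rm BD}_*(F_1\cap F_2) = 1$; combined with the trivial bound ${\rm BD}^*(F_1\cap F_2) \leq 1$, this forces ${\rm BD}(F_1\cap F_2) = 1$.

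The main obstacle is the subadditivity itself. While the pointwise inequality $|(A\cup B)\cap Fg|\leq|A\cap Fg|+|B\cap Fg|$ holds for every finite $F$, the infima witnessing ${\rm BD}^*(A)$ and ${\rm BD}^*(B)$ are typically attained on different finite sets, so one cannot simply add witnesses. In our setting $G$ is countable abelian, hence amenable, so I would sidestep this by fixing a (right) F{\o}lner sequence $(E_n)$ in $G$ and invoking the standard equivalent formulation
\[
{\rm BD}^*(E) \;=\; \sup_{(g_n)\subseteq G} \limsup_{n\to\infty} \frac{|E \cap E_n g_n|}{|E_n|},
\]
from which the desired subadditivity follows immediately by combining the pointwise bound with $\limsup_n(a_n+b_n) \leq \limsup_n a_n + \limsup_n b_n$ and maximizing over the translating sequence.
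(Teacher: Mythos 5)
The paper offers no proof of this lemma at all --- it is prefaced only by ``It is not difficult to observe the following result'' --- so there is no argument of the authors' to compare yours against; your proposal has to stand on its own, and it does. Parts (1), (2) and (4) are handled correctly from the definition, the complementation identities ${\rm BD}^*(G\backslash E)=1-{\rm BD}_*(E)$ and ${\rm BD}_*(G\backslash E)=1-{\rm BD}^*(E)$, and Proposition \ref{right} together with $(G\backslash F)s=G\backslash(Fs)$. More importantly, you have correctly isolated the one genuinely non-routine point: finite subadditivity of ${\rm BD}^*$ cannot be obtained by adding witnesses inside the infimum (in general $\inf_F(a_F+b_F)\not\le\inf_F a_F+\inf_F b_F$), and your appeal to a F{\o}lner-type characterization of ${\rm BD}^*$ is the standard and correct fix; note that this is exactly the paper's own formula (\ref{ppt}) (equivalently the limit formula from \cite[Lemma 2.9]{DHZ1} quoted in Section 3), so you could cite those rather than re-derive the characterization. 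One caveat worth recording: this step uses amenability of $G$, which the lemma as stated in Section 2 does not formally assume. That restriction is harmless here --- the paper only ever works with countable amenable (indeed abelian) groups --- and is in fact essential, since for non-amenable $G$ (e.g.\ a free group) subadditivity of ${\rm BD}^*$, and even the inequality ${\rm BD}_*\le{\rm BD}^*$ asserted without proof in Definition \ref{ubd}, can fail; so part (3) should really be read as a statement about amenable groups.
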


\section{Amenable group}
This section is devoted to the class of amenable groups. This is a class of groups which plays an important role in
many areas of mathematics such as ergodic theory, harmonic analysis, dynamical systems, geometric group theory, probability theory
and statistics.

Let $G$ be a group. A {\it {mean}} for G on $\ell^{\infty}(G)$ is a unital positive linear functional $\sigma : \ell^{\infty}(G)\rightarrow\mathbb{C}$
(unital means that $\sigma(\mathbf{1})=1$). The mean $\sigma$ is {\it {left invariant}} if $\sigma(sf)=\sigma(f)$ for all $s\in G$ and $f\in\ell^{\infty}(G)$, where $(s f)(t)=f(s^{-1}t)$ for all $t\in G$.

\begin{definition}
The group $G$ is said to be {\it {amenable}} if there is a left invariant mean on $\ell^{\infty}(G)$.
\end{definition}

The above definition of a countable amenable group $G$ is equivalent to the existence of a sequence of finite subsets $\{F_{n}\}$ of $G$ which is
asymptotically invariant, i.e.,
$$
\lim_{n\rightarrow\infty}\frac{|g F_{n}\Delta F_{n}|}{|F_{n}|}=0\text { for all }  g\in G,
$$
where $g F_n=\{g f : f\in F_n\}$, $|\cdot|$ denotes the cardinality of a set, and $\Delta$ is the symmetric difference.
Such sequence is called a {\it {(left) F{\o}lner sequence}}.

The class of amenable groups contains in particular all finite groups, all abelian groups and, more generally, all solvable groups. In this paper, we need the
following theorem.

\begin{theorem}(\cite[\textit{Theorem 4.6.1}]{CC})\label{abiean}
Every abelian group is amenable.
\end{theorem}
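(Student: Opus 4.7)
The plan is to apply the Markov--Kakutani fixed point theorem to the weak-$*$ action of $G$ on the convex set of all means on $\ell^{\infty}(G)$. The key observation is that commutativity of $G$ translates directly into commutativity of the induced family of affine operators on means, which is exactly the hypothesis Markov--Kakutani needs.

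First I would introduce the set $M(G)$ of all means on $\ell^{\infty}(G)$. It is nonempty (the point evaluation $f\mapsto f(e)$ is a mean) and convex, and each defining condition (linear, unital, positive) is preserved under weak-$*$ limits, so $M(G)$ is weak-$*$ closed in the closed unit ball of $\ell^{\infty}(G)^{*}$. By the Banach--Alaoglu theorem this unit ball is weak-$*$ compact, so $M(G)$ is a nonempty, convex, compact subset of the locally convex space $\ell^{\infty}(G)^{*}$ with the weak-$*$ topology.

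Next, for each $s\in G$ I would define $T_{s}:M(G)\to M(G)$ by $(T_{s}\sigma)(f)=\sigma(sf)$, where $(sf)(t)=f(s^{-1}t)$ as in the paper. Each $T_{s}$ sends means to means, is affine, and is weak-$*$ continuous because if $\sigma_{\alpha}\to\sigma$ weakly-$*$ then $\sigma_{\alpha}(sf)\to\sigma(sf)$ for every $f\in\ell^{\infty}(G)$. A direct computation gives $T_{s}T_{t}=T_{ts}$ for all $s,t\in G$; commutativity of $G$ then yields $T_{s}T_{t}=T_{st}=T_{ts}=T_{t}T_{s}$, so $\{T_{s}:s\in G\}$ is a commuting family of affine weak-$*$ continuous self-maps of $M(G)$.

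Finally I would invoke the Markov--Kakutani fixed point theorem: any commuting family of continuous affine self-maps of a nonempty compact convex subset of a locally convex Hausdorff space has a common fixed point. This produces $\sigma\in M(G)$ satisfying $T_{s}\sigma=\sigma$ for every $s\in G$, i.e., $\sigma(sf)=\sigma(f)$ for all $s\in G$ and $f\in\ell^{\infty}(G)$, which is precisely the definition of a left invariant mean on $\ell^{\infty}(G)$. Hence $G$ is amenable. The proof has no serious technical obstacle; the conceptual heart is simply the recognition that abelianness of $G$ is exactly what upgrades the cocycle identity $T_{s}T_{t}=T_{ts}$ to true commutativity of the operators, thereby activating the Markov--Kakutani machinery.
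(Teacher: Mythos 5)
Your proof is correct: the Markov--Kakutani fixed point theorem applied to the weak-$*$ compact convex set of means, with commutativity of $G$ upgrading the identity $T_sT_t=T_{ts}$ to a commuting family, is precisely the standard argument, and it is essentially the proof given in the cited source \cite{CC}. The paper itself offers no proof of this statement, quoting it as a known result, so there is nothing further to compare against.
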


\begin{definition}
Let $F$ and $A$ be nonempty finite subsets of $G$. We say that $A$ is
$(F, \varepsilon)$-{\it {invariant}} if $|{s\in A : Fs\subseteq A}|\geq(1-\varepsilon)|A|$.
\end{definition}

\begin{definition}\label{167D}
Let $f$ be a real-valued function on the set of all finite subsets of $G$.
We say that $f(A)$ converges to a limit $L$ as $A$ becomes more and more invariant if for every $\varepsilon> 0$ there are a finite set $F\subseteq G$ and a $\delta> 0$ such that $|f (A) -L| < \varepsilon$ for every nonempty $(F,\delta)$-invariant finite set $A\subseteq G$.
\end{definition}

\begin{theorem}(\cite[\textit{Theorem 4.38}]{KL})\label{hbx01}
Suppose that $G$ is amenable. Let $\phi$ be a $[0,\infty)$-valued function on
the set of all finite subsets of $G$ such that
\begin{enumerate}
\item  $\phi(As) = \phi(A)$ for all finite $A\subseteq G$ and $s\in G$,
\item $\phi(A\cup B)\leq \phi(A) + \phi(B)$ for all finite $A, B\subseteq G$ (subadditivity).
\end{enumerate}
Then $\phi(A)/|A|$ converges to a limit as $A$ becomes more and more invariant.
\end{theorem}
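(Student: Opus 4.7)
The plan is to follow the classical Ornstein--Weiss strategy, using $\varepsilon$-quasi-tilings of the amenable group $G$.

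First I would set $L = \inf\{\phi(A)/|A| : A \subseteq G \text{ finite and nonempty}\}$, which lies in $[0, \infty)$ since $\phi$ is nonnegative and the singleton $\{e\}$ gives a finite value. Trivially $\phi(A)/|A| \geq L$ for every finite $A$, so the task reduces to showing $\phi(A)/|A| \leq L + \varepsilon$ whenever $A$ is sufficiently invariant. A useful preliminary is the uniform bound $\phi(A) \leq M|A|$ with $M = \phi(\{e\})$: by the right-invariance axiom $\phi(\{a\}) = \phi(\{e\}\cdot a) = \phi(\{e\})$, and by iterated subadditivity $\phi(A) \leq \sum_{a \in A}\phi(\{a\}) = M|A|$.

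Next, fix $\varepsilon > 0$ and choose $\delta > 0$ so small that $M\delta < \varepsilon/2$. I would then invoke the Ornstein--Weiss $\delta$-quasi-tiling theorem for amenable groups: one can find a finite collection of finite subsets $T_1, \ldots, T_n \subseteq G$, each satisfying $\phi(T_i)/|T_i| < L + \varepsilon/2$ (chosen by first fixing one set that nearly realizes the infimum and then successively enlarging to progressively more invariant sets while preserving the ratio bound), together with a finite set $F \subseteq G$ and $\delta' > 0$ such that every $(F, \delta')$-invariant finite set $A$ admits index sets $C_i \subseteq G$ for which the translates $\{T_i c : c \in C_i,\ 1 \leq i \leq n\}$ are pairwise disjoint, contained in $A$, and jointly cover at least $(1-\delta)|A|$ elements of $A$. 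Calling this covered union $U$ and writing $R = A \setminus U$, subadditivity combined with the right-invariance $\phi(T_i c) = \phi(T_i)$ gives
$$\phi(A) \leq \phi(R) + \sum_{i=1}^n \sum_{c \in C_i} \phi(T_i c) = \phi(R) + \sum_{i=1}^n |C_i|\,\phi(T_i) < M|R| + (L+\varepsilon/2)\sum_{i=1}^n |C_i|\,|T_i|.$$
Since $|R| \leq \delta|A|$ and $\sum_i |C_i|\,|T_i| = |U| \leq |A|$, dividing by $|A|$ yields $\phi(A)/|A| < M\delta + L + \varepsilon/2 < L + \varepsilon$, as required.

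I expect the main obstacle to be the quasi-tiling theorem itself, whose proof is a delicate inductive/greedy packing argument: one chooses $T_1, T_2, \ldots$ so that each $T_{i+1}$ is so invariant with respect to $T_i$ that, once translates of $T_1, \ldots, T_i$ have been packed maximally inside $A$, the uncovered portion still admits a controlled amount of additional $T_{i+1}$-translates, driving the uncovered fraction below $\delta$ in finitely many stages. A further technical point is verifying that each $T_i$ can simultaneously be chosen with $\phi(T_i)/|T_i|$ close to $L$; this is done by showing that among sets of prescribed invariance the infimum of $\phi/|\cdot|$ still equals $L$, itself a consequence of the quasi-tiling once bootstrapped. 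Everything else --- existence of $L$, the uniform bound, and the final estimate --- is an immediate consequence of the two hypotheses on $\phi$.
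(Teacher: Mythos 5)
First, a point of comparison: the paper does not prove this statement at all --- it is quoted verbatim from Kerr--Li \cite[Theorem 4.38]{KL} --- so your proposal can only be measured against the standard Ornstein--Weiss argument, which is indeed the strategy you outline. Your preliminary steps are fine: the bound $\phi(A)\leq M|A|$ with $M=\phi(\{e\})$ follows correctly from right-invariance and iterated subadditivity, and the final arithmetic (divide the quasi-tiling estimate by $|A|$) is correct, modulo the harmless fact that the Ornstein--Weiss tiles are only $\varepsilon$-disjoint rather than disjoint, which just perturbs the constants.

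The genuine gap is in the selection of the tiles, and you have in effect flagged it yourself. You define $L=\inf\{\phi(A)/|A|\}$ over \emph{all} nonempty finite sets and then need tiles $T_1,\ldots,T_n$ that simultaneously (a) form an Ornstein--Weiss tower, i.e.\ each $T_{i+1}$ is highly invariant relative to $T_i$, and (b) satisfy $\phi(T_i)/|T_i|<L+\varepsilon/2$. Your parenthetical recipe --- ``successively enlarging to progressively more invariant sets while preserving the ratio bound'' --- is exactly the step that does not follow: $\phi$ need not be monotone (subadditivity only bounds $\phi$ of a union from above, not $\phi$ of a subset), so enlarging a near-optimal set to gain invariance can destroy the ratio bound; e.g.\ $\phi(A_0F)\leq|F|\phi(A_0)$ gives ratio roughly $\phi(A_0)$, not $\phi(A_0)/|A_0|$, when $|A_0F|\approx|F|$. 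Your fallback --- that the existence of such tiles ``is a consequence of the quasi-tiling once bootstrapped'' --- is circular as stated. The standard repair is to change the definition of $L$: set $L=\sup_{(F,\delta)}\inf\{\phi(A)/|A|:A\ \text{is}\ (F,\delta)\text{-invariant}\}$, the lim inf of the ratio along increasing invariance. With that definition, tiles of any prescribed invariance with ratio below $L+\varepsilon/2$ exist by definition of an infimum, your quasi-tiling computation then yields $\limsup\leq L+\varepsilon$, and $\liminf\geq L$ is automatic, which is precisely the asserted convergence. (The further identification of the limit with the infimum over \emph{all} finite sets, which the paper uses for the specific function $\varphi_{\mathbf A}$, is legitimate there because $\varphi_{\mathbf A}$ is monotone; it is not part of the theorem you were asked to prove and should not be built into the definition of $L$.)
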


Let $G\curvearrowright X$ be an action and $\mathbf{A}=(A_{1}, \ldots, A_{k})$ a tuple of subsets of $X$.
Suppose that the set $J\subseteq G$ is an independence set for $\mathbf{A}$.
It is readily seen that the function
$$
\varphi_{\mathbf{A}}(F) = \max\{|F\cap J| : J \  \text{is an independence set for} \   \mathbf{A}\}
$$
on the collection of nonempty finite subsets of $G$ satisfies the two conditions in
Theorem \ref{hbx01}, so that the quantity $\varphi_{\mathbf{A}}(F)/|F|$ converges as $F$ becomes more and
more invariant (Definition \ref{167D}) and the limit is equal to inf$_F \varphi_{\mathbf{A}}(F)/|F|$ where $F$
ranges over all nonempty finite subsets of $G$.

\begin{definition}
For a finite tuple $\mathbf{A}=(A_{1}, \ldots, A_{k})$ of subsets of $X$, we define the {\it {independence density}} $I(\mathbf{A})$ of $\mathbf{A}$ to be the above limit.
\end{definition}

\begin{prop}(\cite[\textit{Proposition 12.7}]{KL})\label{c}
Let $\mathbf{A}=(A_{1},\cdots, A_{k})$ be a tuple of subsets of $X$ and let $d>0$.
The following are equivalent:
\begin{enumerate}
\item  $I(\mathbf{A})\geq d$,
\item there are a F{\o}lner sequence $\{F_{n}\}$ and an independence set $J$ for $\mathbf{A}$ such that
\begin{equation*}
  \lim_{n\rightarrow\infty}|F_{n}\cap J|/|F_{n}|\geq d.
\end{equation*}
\end{enumerate}
\end{prop}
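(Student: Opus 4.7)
The plan is to handle the two implications separately: $(2) \Rightarrow (1)$ is immediate from monotonicity, while $(1) \Rightarrow (2)$ proceeds via a compactness argument on the shift space $\{0,1\}^G$ coupled with a pointwise ergodic theorem.

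For $(2) \Rightarrow (1)$, observe that any subset of an independence set is again an independence set (this is immediate from Definitions \ref{indef01} and \ref{indef02}), so $F_n \cap J$ is an independence set contained in $F_n$ and hence $|F_n \cap J| \leq \varphi_{\mathbf{A}}(F_n)$. Dividing by $|F_n|$ and invoking Theorem \ref{hbx01}, which gives $\varphi_{\mathbf{A}}(F_n)/|F_n| \to I(\mathbf{A})$ along any Følner sequence, one concludes $I(\mathbf{A}) \geq \lim_n |F_n \cap J|/|F_n| \geq d$.

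For $(1) \Rightarrow (2)$, fix any Følner sequence $\{F_n\}$. By Theorem \ref{hbx01}, $\varphi_{\mathbf{A}}(F_n)/|F_n| \to I(\mathbf{A}) \geq d$, so one may pick independence sets $J_n \subseteq G$ for $\mathbf{A}$ with $|F_n \cap J_n| = \varphi_{\mathbf{A}}(F_n)$. Let $\Omega \subseteq \{0,1\}^G$ be the set of indicator functions $\chi_J$ of independence sets $J$ for $\mathbf{A}$. Since independence is checked on finite subsets of $J$, the set $\Omega$ is closed in the compact product space $\{0,1\}^G$; and a direct computation using the right shift $(g \cdot \omega)(h) = \omega(hg)$ shows that $\Omega$ is $G$-invariant. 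Form the empirical measures
\begin{equation*}
  \mu_n = \frac{1}{|F_n|}\sum_{g \in F_n}\delta_{g \cdot \chi_{J_n}},
\end{equation*}
each of which is supported on $\Omega$ and satisfies $\mu_n(\{\omega : \omega(e) = 1\}) = |F_n \cap J_n|/|F_n|$. Extract a weak-$*$ subsequential limit $\mu$; by the Følner property $\mu$ is $G$-invariant, it is supported on $\Omega$, and since $\{\omega : \omega(e) = 1\}$ is clopen one has $\mu(\{\omega(e) = 1\}) \geq d$.

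Now apply the pointwise ergodic theorem for actions of countable amenable groups (Lindenstrauss) along a tempered Følner sequence $\{F'_n\}$: for $\mu$-almost every $\omega$, the averages $\tfrac{1}{|F'_n|}\sum_{g \in F'_n}\omega(g)$ converge to a limit $f(\omega)$ with $\int f \, d\mu = \mu(\{\omega(e)=1\}) \geq d$. Since $0 \leq f \leq 1$, there must exist $\omega \in \Omega$ with $f(\omega) \geq d$; setting $J = \{g \in G : \omega(g) = 1\}$ then gives $|F'_n \cap J|/|F'_n| \to f(\omega) \geq d$, and $\omega \in \Omega$ forces $J$ to be an independence set for $\mathbf{A}$, which delivers (2). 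The main technical hurdle is precisely this last step of synthesizing the sequence $\{J_n\}$ of near-optimal finite-scale independence sets into a single infinite-scale $J$: a naive pointwise limit of $J_n$ along a subsequence cannot control $|F_n \cap J|$, because pointwise convergence on each fixed finite subset of $G$ conveys no information about asymptotics on the growing sets $F_n$. The ergodic-theoretic detour through the invariant measure $\mu$ on $\Omega$, together with the pointwise ergodic theorem, is what converts the sequence of local optima into a single global independence set with the correct asymptotic density.
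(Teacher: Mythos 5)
The paper does not actually prove this proposition; it is quoted verbatim from \cite[Proposition 12.7]{KL}, so there is no in-paper argument to compare against. Your proof is correct and is essentially the standard Kerr--Li argument: $(2)\Rightarrow(1)$ by monotonicity of $\varphi_{\mathbf A}$ together with Theorem \ref{hbx01} (you should note explicitly that a F\o lner sequence is eventually $(F,\delta)$-invariant for every finite $F$ and $\delta>0$, which is what lets you conclude $\varphi_{\mathbf A}(F_n)/|F_n|\to I(\mathbf A)$), and $(1)\Rightarrow(2)$ by passing to the closed, right-shift-invariant set $\Omega\subseteq\{0,1\}^G$ of indicators of independence sets, taking a weak-$*$ limit of empirical measures, and extracting a single independence set of the right density via the pointwise ergodic theorem. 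Two small points deserve care but are not gaps: you correctly use the \emph{right} shift $(g\cdot\omega)(h)=\omega(hg)$ (left translates of independence sets need not be independence sets, so this choice is forced), and you should say that you pass to a tempered (sub)F\o lner sequence and check that your averaging convention $\frac{1}{|F'_n|}\sum_{g\in F'_n}f(g\cdot\omega)$ matches the form of Lindenstrauss's theorem you invoke; with the convention of \cite[Theorem 4.28]{KL} it does. The final inference from $\int f\,d\mu\ge d$ and $f\le 1$ to the existence of $\omega\in\Omega$ with $f(\omega)\ge d$ is also sound, since otherwise $f<d$ $\mu$-a.e.\ would force $\int f\,d\mu<d$.
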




In what follows we recall some notions  which were introduced in \cite{P1}.

Let $E\subseteq G$. The {\it {upper asymptotic density}} of $E$ with respect to a F{\o}lner sequence $\mathcal{F}=\{F_{n}\}_{n\in \mathbb{N}}$, denoted by $\overline{d}_{\mathcal{F}}(E)$, is defined by

$$\overline{d}_{\mathcal{F}}(E)=\limsup_{n\rightarrow\infty}\frac{|E\cap F_{n}|}{|F_{n}|}.$$

Similarly, the {\it {lower asymptotic density}} of $E$ with respect to a F{\o}lner sequence $\mathcal{F}=\{F_{n}\}_{n\in \mathbb{N}}$, denoted by $\underline{d}_{\mathcal{F}}(E)$, is defined by $$\underline{d}_{\mathcal{F}}(E)=\liminf_{n\rightarrow\infty}\frac{|E\cap F_{n}|}{|F_{n}|}.$$

One may say $E$ has {\it {asymptotic density}} $d_{\mathcal{F}}(E)$ of $E$ with respect to a F{\o}lner sequence $\mathcal{F}=\{F_{n}\}_{n\in \mathbb{N}}$,  if $\overline{d}_{\mathcal{F}}(E)=\underline{d}_{\mathcal{F}}(E)$, in which $d_{\mathcal{F}}(E)$ is equal to this common value.

Let $\{F_{n}\}_{n\in \mathbb{N}}$ be a F{\o}lner sequence of the amenable group $G$ and $E\subseteq G$. The upper Banach density of $E$ please refer to Definition \ref{ubd}. Meanwhile, we have the following formula for the properties of upper Banach density (see \cite[Lemma 2.9]{DHZ1}).
$$
{\rm {BD}}^{*}(E)=\lim_{n\rightarrow\infty}\sup_{g\in G}\frac{|E\cap F_{n}g|}{|F_{n}g|}.
$$
As for the relationship between upper Banach density and upper asymptotic density, we have the following formula (see \cite[Lemma 3.3]{BBF}).
\begin{equation}\label{ppt}
  {\rm {BD}}^{*}(E)=\sup_{\mathcal{F}}\limsup_{n\rightarrow\infty}\frac{|E\cap F_{n}|}{|F_{n}|},
\end{equation}
where the supremum is taken over all F{\o}lner sequences $\mathcal{F}=\{F_{n}\}_{n\in \mathbb{N}}$ of $G$.

Throughout $G$ is a countable amenable group and $G\curvearrowright X$ is a continuous action on a compact
metric space. We write $\triangle_k(X)$ for the diagonal $\{(x, \cdots, x) : x\in X\}$ in $X^k$.
\begin{definition}\label{b}
We call a tuple $x=(x_{1},\cdots, x_{k})\in X^{k}$ an {\it {IE-tuple}} (or {\it {IE-pair}} if $k=2$) if for every product neighbourhood $U_{1}\times\cdots \times U_{k}$ of $x$ the tuple $(U_{1},\cdots U_{k})$ has positive independence density. We denote the set of IE-tuples of length $k$ by IE$_{k}(X, G)$.
\end{definition}

In this paper, we need the following theorem.

\begin{theorem}(\cite[\textit{Theorem 12.19}]{KL})\label{d02}
${\rm {IE}}_2(X,G)\setminus \Delta_2(X)$ is nonempty if and only if  $h_{{\rm {top}}}(X,G)>0$.
\end{theorem}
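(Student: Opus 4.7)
The plan is to prove both directions separately, with the implication from positive entropy to existence of a non-diagonal IE-pair being substantially harder.

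\textbf{Easy direction} ($\Leftarrow$ as stated, i.e.\ non-diagonal IE-pair $\Rightarrow$ positive entropy): Suppose $(x_{1},x_{2})\in\mathrm{IE}_{2}(X,G)\setminus\Delta_{2}(X)$. Choose disjoint open neighbourhoods $U_{1}\ni x_{1}$, $U_{2}\ni x_{2}$ and fix $\delta>0$ with $d(U_{1},U_{2})\geq\delta$. By Definition of IE-pair, $I(U_{1},U_{2})>0$, so Proposition \ref{c} furnishes a F{\o}lner sequence $\{F_{n}\}$ and an independence set $J$ for $(U_{1},U_{2})$ with $\liminf_{n}|F_{n}\cap J|/|F_{n}|\geq d>0$. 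For each $\omega\in\{1,2\}^{F_{n}\cap J}$, independence yields a point $x_{\omega}\in\bigcap_{s\in F_{n}\cap J}s^{-1}U_{\omega(s)}$; hence $s x_{\omega}\in U_{\omega(s)}$ for every $s\in F_{n}\cap J$. The $x_{\omega}$'s form a $(F_{n},\delta)$-separated set of cardinality $2^{|F_{n}\cap J|}$, so $h_{\mathrm{top}}(X,G)\geq d\log 2>0$.

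\textbf{Hard direction} ($\Rightarrow$): Assume $h_{\mathrm{top}}(X,G)>0$. Fix $\delta>0$ and a F{\o}lner sequence $\{F_{n}\}$ such that the number $s_{F_{n}}(\delta)$ of maximal $(F_{n},\delta)$-separated sets grows at least like $e^{\epsilon|F_{n}|}$ for some $\epsilon>0$. Take a finite open cover $\mathcal{U}=\{U_{1},\ldots,U_{m}\}$ with $\mathrm{diam}(U_{i})<\delta/2$. Distinct $(F_{n},\delta)$-separated points yield distinct itineraries in $\{1,\ldots,m\}^{F_{n}}$, so the set $\Sigma_{n}$ of realised itineraries satisfies $|\Sigma_{n}|\geq e^{\epsilon|F_{n}|}$.

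The crux is a Sauer--Shelah / Karpovsky--Milman combinatorial extraction in the form adapted by Kerr--Li to amenable actions: given $\Sigma\subseteq\{1,\ldots,m\}^{F}$ with $|\Sigma|\geq b^{|F|}$ for some $b>1$, there exist indices $i\neq j$ and a subset $J\subseteq F$ with $|J|\geq\alpha|F|$, for a constant $\alpha=\alpha(b,m)>0$ independent of $F$, such that the restriction $\Sigma|_{J}$ surjects onto $\{i,j\}^{J}$. Applying this to each $\Sigma_{n}$ and pigeonholing on the finitely many pairs $(i,j)$, we may pass to a subsequence and fix $i\neq j$ and subsets $J_{n}\subseteq F_{n}$ which are independence sets for the pair $(U_{i},U_{j})$ with $|J_{n}|/|F_{n}|\geq\alpha$.

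Finally, one upgrades this "one-scale" independence to an honest IE-pair by a compactness argument. Observe that the set
\begin{equation*}
\Omega_{\alpha}=\{(y,z)\in X^{2}:I(V,W)\geq\alpha\ \text{for every product neighbourhood }V\times W\text{ of }(y,z)\}
\end{equation*}
is closed in $X^{2}$. Arranging (by a slight shrinking of the cover) that $\overline{U_{i}}\cap\overline{U_{j}}=\emptyset$, independence of $J_{n}$ for $(U_{i},U_{j})$ together with Proposition \ref{c} supplies elements of $\overline{U_{i}}\times\overline{U_{j}}$ at which the independence-density condition holds at the scale of $(U_{i},U_{j})$; a standard refinement argument (applied to an exhausting sequence of basic neighbourhoods, using Theorem \ref{hbx01} to control the subadditive quantity $\varphi_{(V,W)}$) then produces a point of $\Omega_{\alpha}\cap(\overline{U_{i}}\times\overline{U_{j}})$, which is a non-diagonal IE-pair.

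\textbf{Main obstacle}: The genuine combinatorial heart is the Karpovsky--Milman / Sauer--Shelah lemma in its amenable-group form. For $\mathbb{Z}$-actions the classical bound suffices, but for a general countable amenable $G$ one must use the almost-invariance of the F{\o}lner sets $F_{n}$ together with an averaging/subadditivity argument (which is exactly where Theorem \ref{hbx01} enters) to get the \emph{uniform} density $\alpha$ of the shattered subset $J_{n}\subseteq F_{n}$. This uniform density is what survives in the compactness step and produces a genuine IE-pair, and it is the technical content of Theorem 12.19 of \cite{KL}.
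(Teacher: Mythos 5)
The paper offers no proof of this statement at all: it is imported verbatim as Theorem 12.19 of \cite{KL}, so the only possible comparison is with the Kerr--Li argument itself, and your outline is indeed a faithful reconstruction of that route (Karpovsky--Milman/Sauer--Shelah extraction of a positively dense shattered set of coordinates, followed by a refinement-and-compactness step). The easy direction is essentially complete; the one nick is that two \emph{disjoint} open neighbourhoods need not be at positive distance, so you should shrink $U_1,U_2$ to have disjoint closures before fixing $\delta$ with $d(U_1,U_2)\geq\delta$.

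In the hard direction your skeleton is right, but two load-bearing steps are invoked rather than proved, and the second is described in a way that would not quite work as written. First, the combinatorial extraction with a \emph{uniform} proportion $\alpha=\alpha(b,m)$ is, as you say, the technical heart; asserting it is acceptable in a sketch but it is precisely the content you would have to supply. Second, the ``standard refinement argument'' in the compactness step is not a consequence of subadditivity of $\varphi_{\mathbf A}$ or of Theorem \ref{hbx01}: to pass from positive independence density of the fixed pair $(U_i,U_j)$ to a single point every product neighbourhood of which has positive independence density, one needs the subdivision lemma (Kerr--Li, Lemma 12.14): if $(A_1,A_2)$ has positive independence density and $A_1=A_{1,1}\cup A_{1,2}$, then at least one of $(A_{1,1},A_2)$, $(A_{1,2},A_2)$ does too. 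That lemma is itself another application of the same combinatorial extraction; subadditivity alone gives an inequality in the wrong direction. Moreover, since each subdivision loses a constant factor, the densities obtained at finer and finer scales decrease, so the nested refinement lands you in $\mathrm{IE}_2(X,G)\cap(\overline{U_i}\times\overline{U_j})$ (positive, scale-dependent density at every neighbourhood) rather than in your uniform-density set $\Omega_\alpha$; the latter is both more than you need and more than the argument delivers. With the subdivision lemma supplied and $\Omega_\alpha$ replaced by $\mathrm{IE}_2(X,G)$ itself (which is closed by the same elementary argument you give), the proof closes correctly and agrees with the cited source.
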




\section{Besicovitch-, Weyl- and Banach-mean equicontinuity}

In the study of a dynamical system with bounded complexity (defined by using the
mean metrics), Huang, Li, Thouvenot, Xu and Ye \cite{HLTXY} introduced a notion called equicontinuity in the mean, in 2005.
In 2015, Li, Tu and Ye \cite{LTY1} showed that for a minimal system
the notions of mean equicontinuity and equicontinuity in the mean are equivalent for $\mathbb{Z}$-actions.
The concepts of Besicovitch- and Weyl-mean equicontinuity were introduce in \cite{LTY1} for integer actions and in \cite{FGL} for amenable actions.

In this paper, we give a notion of Banach-mean equicontinuous on a dynamical system for a group action. For countable amenable group action systems, we shows that two concepts Weyl- and Banach-mean equicontinuous are equivalent. By the results of \cite{FGL}, we also know that three concepts of Besicovitch-, Weyl- and Banach-mean equicontinuity are same for abelian group action systems.

\begin{definition}
Let $G$ be a discrete group and Fin$(G)$ be the family
of all non-empty finite subsets of $G$. Let $X$ be a compact metric space with metric $d$. For $x,y\in X$, we denote
$$
\overline{D}(x,y)=\inf_{F\in {\rm {Fin}}(G)}\sup_{g\in G}\frac{1}{|F|}\sum_{t\in Fg}d(tx,ty).
$$
So we call the action $G\curvearrowright X$ is {\it {Banach-mean equicontinuous}} or simply {\it {B-mean equicontinuous}} if for any $\varepsilon>0$, there exists $\delta>0$ such that $\overline{D}(x,y)<\varepsilon$ whenever $d(x,y)<\delta$ for $x, y\in X$.

A point $x\in X$ is called {\it {Banach-mean equicontinuous point}} if for for every $\epsilon>0$, there exists $\delta>0$ such that for every $y\in B(x, \delta)$,
$$
\overline{D}(x,y)<\varepsilon.
$$

We say the action $G\curvearrowright X$ is {\it {almost Banach-mean equicontinuous}} if the dynamic system $(X, G)$ has at least one Banach-mean equicontinuous point.
\end{definition}

By the compactness of $X$, it is easy to see that the action $G\curvearrowright X$ is Banach-mean equicontinuous if and only if every point in $X$ is Banach-mean equicontinuous point.

In \cite{FGL}, Fuhrmann, Gr\"{o}ger and Lenz introduced the concepts Besicovitch- and Weyl-mean equicontinuity for amenable action systems.

\begin{definition}\label{mmm}
Let $G$ be an amenable group and $\mathcal{F}=\{F_{n}\}_{n\in \mathbb{N}}$ be a F{\o}lner sequence of $G$. We call the action $G\curvearrowright X$ is  {\it {Besicovitch-$\mathcal{F}$-mean equicontinuous}} if for every $\varepsilon>0$, there exists $\delta>0$ such that
$$
D_{\mathcal{F}}(x, y):=\limsup_{n\rightarrow\infty}\frac{1}{|F_{n}|}\sum_{g\in F_{n}}d(gx,gy)<\varepsilon,
$$
for all $x, y\in X$ with $d(x, y)<\delta$. The dependence on the F{\o}lner sequence immediately motivates the next definition. We say the action $G\curvearrowright X$ is  {\it {Weyl-mean equicontinuous}} if for every $\varepsilon>0$, there exists $\delta>0$ such that for all $x, y\in X$ with $d(x, y)<\delta$ we have
\begin{equation}\label{weyl}
  D(x, y):=\sup \ \{D_{\mathcal{F}}(x, y) : \mathcal{F} \  \text {  is a F{\o}lner sequence}\}<\varepsilon.
\end{equation}

A point $x\in X$ is called {\it {Weyl-mean equicontinuous point}} if for for every $\epsilon>0$, there exists $\delta>0$ such that for every $y\in B(x, \delta)$,
$$
D(x, y)<\varepsilon.
$$

We say the action $G\curvearrowright X$ is {\it {almost Weyl-mean equicontinuous}} if the dynamic system $(X, G)$ has at least one Weyl-mean equicontinuous point.
\end{definition}

Before we can proceed, a few comments are in order. First, note that $\overline{D}$, $D_{\mathcal{F}}$ and $D$ are pseudometric. Moreover, as is not hard to see, $D$ is $G$-invariant, that is, $D(gx, gy)=D(x, y)$ for all $x, y\in X$ and $g\in G$.

In the following, for the amenable group action system, we will see that the Banach pseudometric $\overline{D}(\cdot, \cdot)$ equals to the Wely pseudometric $D(\cdot, \cdot)$.

\begin{theorem}
Let $G$ be a countable amenable group, $X$ be a compact metric space and $G\curvearrowright X$ be a dynamic system. Then
\begin{equation*}
  \overline{D}(x, y)=D(x, y) \quad {\text {for any pair  }}\  x, y\in X.
\end{equation*}
\end{theorem}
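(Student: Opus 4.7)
The plan is to prove the two inequalities $D(x,y) \leq \overline{D}(x,y)$ and $\overline{D}(x,y) \leq D(x,y)$ separately. Throughout, I set $\varphi(g) := d(gx, gy)$, a nonnegative function on $G$ bounded by $\mathrm{diam}(X)$, which lets me treat both quantities as functionals on a single bounded function on the group.

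For the inequality $D(x,y) \leq \overline{D}(x,y)$, fix a F\o lner sequence $\mathcal{F} = \{F_n\}$ and a finite set $F \subseteq G$. A swap of summation yields
$$\frac{1}{|F_n||F|} \sum_{s \in F_n} \sum_{t \in Fs} \varphi(t) \;=\; \frac{1}{|F_n||F|} \sum_{t \in G} \varphi(t) \cdot |F_n \cap F^{-1}t|.$$
For $t$ in the inner core $A_n := \bigcap_{f \in F} fF_n$ one has $F^{-1}t \subseteq F_n$, hence $|F_n \cap F^{-1}t| = |F|$, while the F\o lner condition forces $|F_n \triangle A_n|/|F_n| \to 0$. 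Together with the boundedness of $\varphi$, this gives
$$\frac{1}{|F_n|} \sum_{t \in F_n} \varphi(t) \;\leq\; \frac{1}{|F_n||F|} \sum_{s \in F_n} \sum_{t \in Fs} \varphi(t) + o(1) \;\leq\; \sup_{g \in G} \frac{1}{|F|} \sum_{t \in Fg} \varphi(t) + o(1).$$
Taking $\limsup$ as $n \to \infty$, then the infimum over $F \in \mathrm{Fin}(G)$, and finally the supremum over F\o lner sequences $\mathcal{F}$, yields $D(x,y) \leq \overline{D}(x,y)$.

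For the reverse inequality $\overline{D}(x,y) \leq D(x,y)$, fix $\varepsilon > 0$ and choose any F\o lner sequence $\{F_n\}$ of $G$. Since $\overline{D}(x,y)$ is an infimum over finite sets, for each $n$ one has $\sup_{g \in G} \frac{1}{|F_n|} \sum_{t \in F_n g} \varphi(t) \geq \overline{D}(x,y)$, so there exists $g_n \in G$ with
$$\frac{1}{|F_n|} \sum_{t \in F_n g_n} \varphi(t) \;\geq\; \overline{D}(x,y) - \varepsilon.$$
Set $\tilde{F}_n := F_n g_n$. The identity $g\tilde{F}_n \triangle \tilde{F}_n = (gF_n \triangle F_n) g_n$ and $|\tilde F_n|=|F_n|$ show that $\{\tilde{F}_n\}$ is again a F\o lner sequence, whence $D_{\{\tilde{F}_n\}}(x,y) \geq \overline{D}(x,y) - \varepsilon$ and consequently $D(x,y) \geq \overline{D}(x,y) - \varepsilon$. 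Letting $\varepsilon \to 0$ completes the argument.

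The main obstacle is the averaging step in the first inequality: one has to quantitatively control the contribution of the boundary set $F_n \triangle A_n$, which requires invoking the F\o lner condition $|fF_n \triangle F_n|/|F_n| \to 0$ once for each of the finitely many $f \in F$ (note the finiteness of $F$ is essential here). The reverse direction is comparatively cheap, reducing to the observation (already recorded in Proposition \ref{right} for densities of sets) that right-translation preserves the F\o lner property, combined with the defining infimum of $\overline{D}$.
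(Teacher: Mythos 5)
Your proof is correct and follows essentially the same route as the paper: both directions are handled separately, and the inequality $\overline{D}(x,y)\le D(x,y)$ is obtained exactly as in the paper by choosing $g_n$ with $\frac{1}{|F_n|}\sum_{t\in F_ng_n}d(tx,ty)$ close to the supremum and observing that $\{F_ng_n\}$ is again a left F{\o}lner sequence (you argue directly where the paper argues by contradiction). For $D(x,y)\le\overline{D}(x,y)$ the paper picks, by pigeonhole, a single best translate $t'\in F$ with $\sum_{h\in F_n}d(t'hg\,x,t'hg\,y)\le |F_n|(\overline{D}(x,y)+\varepsilon)$ and then compares $t'F_ng$ with $F_ng$ via one symmetric difference, whereas you double-count the multiset $\{Fs: s\in F_n\}$ against the inner core $\bigcap_{f\in F}fF_n$; these are interchangeable bookkeeping devices for the same averaging idea, both ultimately resting on the F{\o}lner condition and the boundedness of $d$.
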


\begin{proof}
Let $x, y\in X$.
Firstly, we show that $D(x, y)\leq \overline{D}(x, y)$.

Let $\varepsilon>0$.
From the definition of $\overline{D}(x, y)$,
there is a nonempty finite subset $F\in {\rm {Fin}}(G)$ such that
$$
\sup_{g\in G}\frac{1}{|F|}\sum_{t\in Fg}d(tx,ty)<\overline{D}(x, y)+\varepsilon.
$$

Let  $\{F_{n}\}_{n\in \mathbb{N}}$ be a F{\o}lner sequence of $G$. In the following we will show that
$$
\limsup_{n\rightarrow\infty}\frac{1}{|F_{n}|}\sum_{t\in F_{n}}d(tx,ty)\leq \overline{D}(x, y)+\varepsilon.
$$

Given $g\in G$. For every $h\in F_{n}$, one has
$$
\frac{1}{|F|}\sum_{s\in Fhg}d(sx,sy)\leq\sup_{g'\in G}\frac{1}{|F|}\sum_{s\in Fg'}d(sx,sy)<\overline{D}(x, y)+\varepsilon.
$$
Thus it follows that
$$
\sum_{h\in F_{n}}\sum_{s\in Fhg}d(sx,sy)<|F_{n}||F|(\overline{D}(x, y)+\varepsilon).
$$
We denote by $\alpha(h,t)=d((thg)x, (thg)y)$ for $h\in F_{n}$ and $t\in F$. Then the above inequality can be re-written as
$$
\sum_{h\in F_{n}}\sum_{t\in F}\alpha(h, t)<|F_{n}||F|(\overline{D}(x, y)+\varepsilon).
$$
It is clear that there is $t'\in F$ such that
$$
\sum_{h\in F_{n}}\alpha(h,t')=\min\left\{\sum_{h\in F_{n}}\alpha(h,t): t\in F\right\}.
$$
Therefore, we get
$$
(\overline{D}(x, y)+\varepsilon)|F_{n}||F|>\sum_{h\in F_{n}}\sum_{t\in F}\alpha(h,t)=\sum_{t\in F}\sum_{h\in F_{n}}\alpha(h,t)\geq |F| \sum_{h\in F_{n}}\alpha(h,t'),
$$
which implies
$$
\sum_{s\in t' F_{n} g} d(s x, s y)=\sum_{h\in F_{n}}\alpha(h,t')<|F_{n}|(\overline{D}(x, y)+\varepsilon).
$$
Note that
\begin{equation*}
  \begin{split}
  \sum_{s\in F_{n} g} d(s x, s y) & \leq \sum_{s\in t' F_{n} g} d(s x, s y)
+\sum_{s\in t' F_{n} g\triangle F_{n} g} d(s x, s y) \\
    &<|F_n|(\overline{D}(x, y)+\varepsilon)
+|t' F_{n} g\triangle F_{n} g|\cdot {\rm {diam}}(X)\\
& =|F_n|(\overline{D}(x, y)+\varepsilon)+|t' F_{n}\triangle F_{n}|\cdot {\rm {diam}}(X)
\end{split}
\end{equation*}
where ${\rm {diam}}(X)$ is the diameter of the compact metric space $(X, d)$.
Since $F_n$ is a F{\o}lner sequence, we have
\begin{equation*}
  \begin{split}
 \limsup_{n\rightarrow\infty} \frac{1}{|F_{n}|}\sum_{s\in F_{n}g}d(sx, sy) & \leq \overline{D}(x, y)+\varepsilon+\limsup_{n\rightarrow\infty} \frac{|t' F_{n}\triangle F_{n}|}{|F_n|} {\rm {diam}}(X)\\
      & = \overline{D}(x, y)+\varepsilon.
  \end{split}
\end{equation*}
From the arbitrariness of the F{\o}lner sequence $\{F_n\}$ we get
$$
\sup_{\{F_{n}\}}\  \limsup_{n\rightarrow\infty}\frac{1}{|F_{n}|}\sum_{g\in F_{n}}d(gx,gy)\leq \overline{D}(x, y)+\varepsilon
$$
where the supremum is taken over all F{\o}lner sequence of $G$.
That is
\begin{equation*}
  D(x, y)\leq \overline{D}(x, y)+\varepsilon.
\end{equation*}
The arbitrariness of $\varepsilon$ implies that
\begin{equation*}
  D(x, y)\leq \overline{D}(x, y).
\end{equation*}

Suppose that $D(x_0, y_0)<\overline{D}(x_0, y_0)$ for some $x_0, y_0\in X$. In what follows we will obtain a contradiction.

We choose two real number $\eta_1, \eta_2\in\mathbb{R}$ such that
\begin{equation}\label{MP006}
  D(x_0, y_0)<\eta_1<\eta_2<\overline{D}(x_0, y_0).
\end{equation}

Let $\{F_{n}\}_{n\in \mathbb{N}}$ be a F{\o}lner sequence of the amenable group $G$. Note that $F_{n}$ is a nonempty finite subset of $G$ for each $n\in\mathbb{N}$. From the definition of $\overline{D}(x_{0}, y_{0})$, we have
$$
\sup_{g\in G}\frac{1}{|F_{n}|}\sum_{t\in F_{n}g}d(tx_{0},ty_{0})\geq \overline{D}(x_{0}, y_{0})>\eta_2.
$$
Thus, for each $n\in\mathbb{N}$, there exists $g_{n}\in G$ such that
$$
\frac{1}{|F_{n}g_{n}|}\sum_{t\in F_{n}g_{n}}d(tx_{0},ty_{0})>\eta_2.
$$
Set $H_{n}=F_{n}g_{n}$. Since $\mathcal{F}':=\{H_{n}\}_{n\in \mathbb{N}}$ is also a (left) F{\o}lner sequence of $G$, we get
$$
\eta_1>D(x_0, y_0)\geq D_{\mathcal{F}'}(x_0, y_0)=\limsup_{n\rightarrow\infty}\frac{1}{|H_{n}|}\sum_{g\in H_{n}}d(gx_{0},gy_{0})\geq\eta_2.
$$
This is a contradiction. Hence the theorem is proved.
\end{proof}

From above Theorem it follows that two concepts of Banach- and Weyl-mean equicontinuous are equivalent for the amenable group action system.

\begin{Corollary}
Let $G$ be a countable amenable group, $X$ be a compact metric space and $G\curvearrowright X$ be a dynamic system. Then $G\curvearrowright X$ is Banach-mean equicontinuous if and only if
$G\curvearrowright X$ is Weyl-mean equicontinuous.
\end{Corollary}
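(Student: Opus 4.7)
The plan is to derive this purely as a restatement of the theorem proved immediately above. The theorem establishes the pointwise equality $\overline{D}(x,y)=D(x,y)$ for every pair $x,y\in X$, and the two notions of equicontinuity are defined word-for-word identically except for which of the two pseudometrics appears; so the corollary reduces to substituting one for the other.

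Concretely, I would proceed as follows. First, I would recall the two definitions side by side: Banach-mean equicontinuity asks that for every $\varepsilon>0$ there be a $\delta>0$ with $\overline{D}(x,y)<\varepsilon$ whenever $d(x,y)<\delta$, while Weyl-mean equicontinuity asks the same with $D(x,y)$ in place of $\overline{D}(x,y)$. Second, I would fix an arbitrary $\varepsilon>0$ and an arbitrary pair $x,y\in X$ with $d(x,y)<\delta$, and invoke the preceding theorem to replace $\overline{D}(x,y)$ by $D(x,y)$ (or vice versa) in the defining inequality. This immediately gives each implication.

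There is no genuine obstacle: the theorem does all the work, and what remains is an unpacking of definitions. The only point deserving any care is that the theorem is proved under the hypothesis that $G$ is a countable amenable group on a compact metric space, which is exactly the hypothesis of the corollary, so it may be applied verbatim. I would therefore present the proof in essentially one sentence, namely that by the theorem the pseudometrics $\overline{D}$ and $D$ agree on $X\times X$, and hence the defining condition of Banach-mean equicontinuity holds for a given $\varepsilon,\delta$ if and only if the defining condition of Weyl-mean equicontinuity holds for the same $\varepsilon,\delta$.
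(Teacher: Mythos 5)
Your proposal is correct and matches the paper exactly: the paper states the corollary as an immediate consequence of the preceding theorem $\overline{D}(x,y)=D(x,y)$, with no further argument needed beyond substituting one pseudometric for the other in the two definitions. Nothing is missing.
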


According to the theorem on the independence of F{\o}lner sequences of an amenable group in \cite[\textit{Theorem 1.3, p.\,6}]{FGL}, we can get the following result.

\begin{theorem}
Let $G$ be a countable abelian group and $(X, G)$ be a dynamical system. Then the following three statements are equivalent.
\begin{enumerate}
  \item $(X, G)$ is Banach-mean equicontinuous.
  \item $(X, G)$ is Weyl-mean equicontinuous.
  \item $(X, G)$ is Besicovitch-$\mathcal{F}$-mean equicontinuous for some left F{\o}lner sequence $\mathcal{F}$.
\end{enumerate}
\end{theorem}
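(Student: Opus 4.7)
The plan is to split the cyclic equivalence into three pairwise implications, and to observe that two of them come essentially for free from earlier material in the excerpt. The equivalence (1)$\iff$(2) is immediate from the preceding Corollary, since every abelian group is amenable by Theorem \ref{abiean}. The implication (2)$\Rightarrow$(3) is also immediate: by definition $D_{\mathcal{F}}(x,y)\leq D(x,y)$ for any F{\o}lner sequence $\mathcal{F}$, so any $\delta$ that witnesses Weyl-mean equicontinuity simultaneously witnesses Besicovitch-$\mathcal{F}$-mean equicontinuity, in particular for the $\mathcal{F}$ appearing in (3).

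The real work lies in the remaining implication (3)$\Rightarrow$(2). My plan is to invoke the F{\o}lner-sequence-independence theorem \cite[Theorem 1.3, p.\,6]{FGL}, which asserts that for a countable abelian group $G$ the Besicovitch pseudometric $D_{\mathcal{F}}(\cdot,\cdot)$ does not depend on the choice of left F{\o}lner sequence $\mathcal{F}$. Once this is in hand, the formula $D(x,y)=\sup_{\mathcal{F}}D_{\mathcal{F}}(x,y)$ collapses to $D(x,y)=D_{\mathcal{F}_0}(x,y)$ for the single F{\o}lner sequence $\mathcal{F}_0$ supplied by hypothesis (3). Consequently, any $\delta>0$ witnessing Besicovitch-$\mathcal{F}_0$-mean equicontinuity at level $\varepsilon$ also witnesses Weyl-mean equicontinuity at the same level, which closes the cycle.

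The main obstacle I anticipate is the accurate quotation and application of the F{\o}lner-sequence-independence result, in particular verifying that it applies to arbitrary countable abelian groups without extra hypotheses on $G$ or on $X$. A back-up route, should a direct citation prove insufficient, would be to pass through the Banach pseudometric $\overline{D}$: the previous theorem shows $\overline{D}(x,y)=D(x,y)$ for all countable amenable groups, and one could try to sandwich $\overline{D}(x,y)$ between $D_{\mathcal{F}_0}(x,y)$ and $D(x,y)$ by exploiting commutativity of $G$ to replace the infimum over $F\in{\rm Fin}(G)$ in the definition of $\overline{D}$ by averages computed along $\mathcal{F}_0$, thereby forcing $D_{\mathcal{F}_0}(x,y)=\overline{D}(x,y)=D(x,y)$ directly.
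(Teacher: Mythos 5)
Your proposal is correct and follows essentially the same route as the paper: the paper derives (1)$\iff$(2) from the preceding Corollary (via amenability of abelian groups) and settles the Besicovitch direction by citing exactly the F{\o}lner-sequence-independence theorem \cite[Theorem 1.3]{FGL}, which is the only nontrivial ingredient. The paper in fact gives no more detail than that citation, so your write-up, including the easy observation $D_{\mathcal{F}}\leq D$ for (2)$\Rightarrow$(3), is if anything slightly more explicit than the original.
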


Let $G$ be a countable amenable group and $(X, G)$ be a dynamical system.
Let $\mathcal{E}$ denote the set of all Weyl-mean equicontinuous points of the dynamic system $(X, G)$. For every $\varepsilon>0$, let
\begin{equation}\label{xxx}
 \mathcal{E}_{\varepsilon}=\bigg\{x\in X : \exists \, \delta>0,\, \forall\, y, z\in B(x,\delta), \, D(y, z)<\varepsilon \bigg\}.
\end{equation}

For the Weyl-mean equicontinuous points we have the following proposition.

\begin{prop}\label{g}
Let $G$ be a countable amenable group, $(X, G)$ be a dynamical system and $\varepsilon>0$. Then
$\mathcal{E}_{\varepsilon}$ is open and $s \,\mathcal{E}_{\varepsilon/2}\subseteq \mathcal{E}_{\varepsilon}$ for all $s\in G$. Moreover, $\mathcal{E}=\bigcap\limits_{m=1}^{\infty}\mathcal{E}_{\frac{1}{m}}$ is a $G_{\delta}$ subset of $X$.
\end{prop}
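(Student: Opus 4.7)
The proposition breaks into three claims, and my plan is to dispatch them in the order stated. For openness of $\mathcal{E}_\varepsilon$, I would let $x\in\mathcal{E}_\varepsilon$ with witnessing $\delta>0$, and then simply verify that the whole ball $B(x,\delta)$ lies in $\mathcal{E}_\varepsilon$: for any $x'\in B(x,\delta)$, the radius $\delta':=\delta-d(x,x')$ satisfies $B(x',\delta')\subseteq B(x,\delta)$, so the defining condition for $x$ transfers verbatim to $x'$.

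For the shift inclusion $s\,\mathcal{E}_{\varepsilon/2}\subseteq\mathcal{E}_\varepsilon$, the key ingredient is the $G$-invariance of the pseudometric $D$ noted right after Definition \ref{mmm}, together with the fact that each element of $G$ acts as a homeomorphism on $X$. Given $x\in\mathcal{E}_{\varepsilon/2}$ with witness $\delta>0$, I would use continuity of $s^{-1}:X\to X$ at the point $sx$ to choose $\delta'>0$ so small that $s^{-1}(B(sx,\delta'))\subseteq B(x,\delta)$. Then for any $y',z'\in B(sx,\delta')$ the points $s^{-1}y',s^{-1}z'$ lie in $B(x,\delta)$, so $D(s^{-1}y',s^{-1}z')<\varepsilon/2$, and $G$-invariance yields $D(y',z')=D(s^{-1}y',s^{-1}z')<\varepsilon/2<\varepsilon$. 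Hence $sx\in\mathcal{E}_\varepsilon$.

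For the $G_\delta$ statement, the openness of each $\mathcal{E}_{1/m}$ is already in hand, so everything reduces to the set equality $\mathcal{E}=\bigcap_{m=1}^\infty\mathcal{E}_{1/m}$. The inclusion $\mathcal{E}\supseteq\bigcap_m\mathcal{E}_{1/m}$ is immediate: if $x\in\mathcal{E}_{1/m}$ for all $m$, then in particular $D(x,y)<1/m$ for all $y$ in some ball around $x$, so $x$ is Weyl-mean equicontinuous in the sense of Definition \ref{mmm}. For the reverse inclusion I would invoke the triangle inequality for the pseudometric $D$: given a Weyl-mean equicontinuous point $x$ and any $m\in\mathbb{N}$, pick $\delta>0$ so that $D(x,y)<1/(2m)$ for every $y\in B(x,\delta)$; then for any $y,z\in B(x,\delta)$ one gets $D(y,z)\le D(y,x)+D(x,z)<1/m$, which says $x\in\mathcal{E}_{1/m}$.

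I do not anticipate a genuine obstacle in any of the three pieces; the only subtlety is the second one, where one must remember both that $G$ acts by homeomorphisms (so that $s^{-1}$ is continuous) and that $D$ is $G$-invariant — without the latter the inclusion would fail, so the verification hinges on that observation already recorded in the excerpt.
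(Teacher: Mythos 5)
Your proposal is correct and follows essentially the same route as the paper: the same ball argument for openness, continuity of $s^{-1}$ plus invariance of $D$ for the shift inclusion, and the triangle inequality with $1/(2m)$ for the identity $\mathcal{E}=\bigcap_{m}\mathcal{E}_{1/m}$. The only cosmetic difference is in the second step, where you invoke the $G$-invariance of $D$ recorded after Definition \ref{mmm}, while the paper re-derives the needed inequality inline via $D_{\mathcal{F}}(u,v)=D_{\mathcal{F}s}(s^{-1}u,s^{-1}v)\le D(s^{-1}u,s^{-1}v)$ and the fact that $\{F_n s\}$ is again a F{\o}lner sequence.
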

\begin{proof}
Let $x\in \mathcal{E}_{\varepsilon}$. Choose $\delta>0$ satisfying the condition from the definition of $\mathcal{E}_{\varepsilon}$ for $x$. Fix $y\in B(x, \delta/2)$. If $z,w\in B(y, \delta/2)$, then $z,w\in B(x,\delta)$, so $D(z, w)<\varepsilon$.
This shows that $B(x, \delta/2)\subseteq \mathcal{E}_{\varepsilon}$ and hence $\mathcal{E}_{\varepsilon}$ is open.

Let $s\in G$. Suppose $x\in s \,\mathcal{E}_{\varepsilon/2}$, thus $s^{-1}x\in \mathcal{E}_{\varepsilon/2}$. Choose $\delta>0$ satisfying the condition from the definition of $\mathcal{E}_{\varepsilon/2}$ for $s^{-1} x$. That is, for all $y,z\in B(s^{-1}x, \delta)$, one has $D(y, z)<\varepsilon/2.$
By the continuity of the map $s^{-1}$, there exists $\eta>0$ such that $d(s^{-1}y,s^{-1}x)<\delta$ for any $y\in B(x,\eta)$.

Let $u,v\in B(x,\eta)$, then $s^{-1}u,s^{-1}v\in B(s^{-1}x, \delta)$.

Let $\mathcal{F}=\{F_n\}_{n\in\mathbb{N}}$ be any F{\o}lner sequence of $G$. Note that $\mathcal{F} s=\{F_n s\}_{n\in\mathbb{N}}$ is also a (left) F{\o}lner sequence of $G$.
Thus, we have
\begin{equation*}
\begin{split}
D_{\mathcal{F}}(u, v)&=\limsup_{n\rightarrow\infty}\frac{1}{|F_{n}|}\sum_{g\in F_{n}} d(g s s^{-1} u, g s s^{-1} v)\\
&=\limsup_{n\rightarrow\infty}\frac{1}{|F_{n} s|}\sum_{t\in F_{n} s}d(t (s^{-1}u), t (s^{-1}v))\\
&=D_{\mathcal{F}s}(s^{-1}u, s^{-1}v)\leq D(s^{-1}u, s^{-1}v)<\varepsilon/2.
\end{split}
\end{equation*}
The arbitrariness of the F{\o}lner sequence $\mathcal{F}$ indicates that $D(u, v)\leq\varepsilon/2<\varepsilon$ which implies $x\in \mathcal{E}_{\varepsilon}$. Hence we get $s \,\mathcal{E}_{\varepsilon/2}\subseteq \mathcal{E}_{\varepsilon}$.

If $x\in X$ belongs to all $\mathcal{E}_{\frac{1}{m}}$, then clearly $x\in \mathcal{E}$.

Conversely, if $x\in \mathcal{E}$ and $m\geq1$, there exists $\delta>0$ such that $D(x, y)<1/2m$ for all $y\in B(x,\delta)$. If $y,z\in B(x,\delta)$, then
\begin{equation*}
D(y, z)\leq D(y, x)+D(x, z)<\frac{1}{m}.
\end{equation*}
Thus $x\in \mathcal{E}_{\frac{1}{m}}$. Therefore we get $\mathcal{E}=\bigcap\limits_{m=1}^{\infty}\mathcal{E}_{\frac{1}{m}}$. Hence the proof is completed.
\end{proof}

\section{Weyl-mean sensitivity}
Let $X$ be a compact metric space.
Recall that a subset set of $X$ is called {\it {residual}} if it contains the intersection of a countable collection of dense open sets. By the Baire category theorem, a residual set is also dense in $X$.

\begin{definition}
Let $G\curvearrowright X$ be a continuous action and $x\in X$ be a point.
We call the point $x$ is  {\it {Weyl-mean sensitive point}} if there exists $\delta>0$ such that for every $\varepsilon>0$, there is $y\in B(x,\varepsilon)$ satisfying
$$
D(x, y)>\delta.
$$
Here the definition of the function $D(\cdot\,,\cdot)$ please refer to (\ref{weyl}).

We call the action $G\curvearrowright X$ is {\it {Weyl-mean sensitive}} if every point $x\in X$ is the Weyl-mean sensitive point.
\end{definition}

\begin{prop}
Let $G$ be a countable amenable group and $X$ a compact metric space.
Let $G\curvearrowright X$ be a transitive system.
\begin{enumerate}
\item  The set of Weyl-mean equicontinuous points is either empty or residual. If in addition the action $G\curvearrowright X$ is almost Weyl-mean equicontinuous, then every transitive point is Weyl-mean equicontinuous.
\item If the action $G\curvearrowright X$ is minimal and almost Weyl-mean equicontinuous, then it is Weyl-mean equicontinuous.
\end{enumerate}
\end{prop}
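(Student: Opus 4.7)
The plan is to derive both items from the two key properties of the sets $\mathcal{E}_\varepsilon$ recorded in Proposition \ref{g}: each $\mathcal{E}_\varepsilon$ is open, and $s\mathcal{E}_{\varepsilon/2}\subseteq \mathcal{E}_\varepsilon$ for every $s\in G$. The set of Weyl-mean equicontinuous points is exactly $\mathcal{E}=\bigcap_{m\geq 1}\mathcal{E}_{1/m}$, so to see it is residual it suffices to show each $\mathcal{E}_{1/m}$ is dense whenever $\mathcal{E}\neq\emptyset$.

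For the first assertion of (1), I would argue as follows. Fix $m\in\mathbb{N}$ and pick any $x_0\in\mathcal{E}$; then $x_0\in\mathcal{E}_{1/(2m)}$, so $\mathcal{E}_{1/(2m)}$ is a non-empty open set. For any non-empty open $U\subseteq X$, topological transitivity supplies $s\in G$ with $s\mathcal{E}_{1/(2m)}\cap U\neq\emptyset$; by Proposition \ref{g}, $s\mathcal{E}_{1/(2m)}\subseteq\mathcal{E}_{1/m}$, and hence $\mathcal{E}_{1/m}\cap U\neq\emptyset$. Thus each $\mathcal{E}_{1/m}$ is open and dense, so $\mathcal{E}$ is residual.

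For the second assertion of (1), let $x$ be a transitive point and fix $m$. Density of $Gx$ together with openness and non-emptiness of $\mathcal{E}_{1/(2m)}$ yields $s\in G$ with $sx\in\mathcal{E}_{1/(2m)}$; applying Proposition \ref{g} with group element $s^{-1}$ now gives
\[
x=s^{-1}(sx)\in s^{-1}\mathcal{E}_{1/(2m)}\subseteq\mathcal{E}_{1/m},
\]
so $x\in\bigcap_m\mathcal{E}_{1/m}=\mathcal{E}$. For item (2), minimality makes every point of $X$ transitive; combining this with part (1) under the almost Weyl-mean equicontinuous hypothesis forces $\mathcal{E}=X$, i.e.\ every point is a Weyl-mean equicontinuous point. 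Invoking the Corollary at the end of Section 4 (equivalence of Banach- and Weyl-mean equicontinuity) together with the remark right after the definition of Banach-mean equicontinuity (on a compact metric space, pointwise Banach-mean equicontinuity at every point is equivalent to global Banach-mean equicontinuity), I conclude that $G\curvearrowright X$ is Weyl-mean equicontinuous.

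The conceptual heart of the proof is the first transitivity step: one uses the group action to slide the non-empty open set $\mathcal{E}_{1/(2m)}$ into any prescribed open set $U$, and then Proposition \ref{g} absorbs the slid copy into $\mathcal{E}_{1/m}$. The price of this maneuver is the factor-of-two loss in $\varepsilon$, which is precisely why the argument needs the refined family $\{\mathcal{E}_\varepsilon\}$ rather than $\mathcal{E}$ itself. I do not anticipate any substantive obstacle beyond keeping the $s$ versus $s^{-1}$ bookkeeping straight when passing between the two assertions of (1).
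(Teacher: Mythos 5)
Your proposal is correct and follows essentially the same route as the paper: both arguments rest on the openness of $\mathcal{E}_{\varepsilon}$ and the inclusion $s\,\mathcal{E}_{\varepsilon/2}\subseteq\mathcal{E}_{\varepsilon}$ from Proposition \ref{g}, use transitivity to slide $\mathcal{E}_{\varepsilon/2}$ into an arbitrary open set to get density, and pull a transitive point back via $s^{-1}$. The only difference is that you spell out the final compactness step upgrading ``every point is Weyl-mean equicontinuous'' to global Weyl-mean equicontinuity in item (2), which the paper leaves implicit.
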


\begin{proof}
If $\mathcal{E}_{\varepsilon}$ is empty for some $\varepsilon>0$ then the set of Weyl-mean equicontinuous points $\mathcal{E}$ is empty.

Now, we assume that
every $\mathcal{E}_{\varepsilon}$ is nonempty. Then, for each $\varepsilon>0$, $\mathcal{E}_{\varepsilon}$ is nonempty open subset of $X$.
In what follows we show that every $\mathcal{E}_{\varepsilon}$ is dense. Let $U$ be any nonempty open subset of $X$.
By the transitivity of the action $G\curvearrowright X$, noting $\mathcal{E}_{\varepsilon/2}$ being nonempty open subset of $X$ and Proposition \ref{g}, there exists $s\in G$ such that $\emptyset\neq U\cap s\,\mathcal{E}_{\varepsilon/2}\subseteq U\cap \mathcal{E}_{\varepsilon}$.

Hence $\mathcal{E}$ is either empty or residual by the Baire Category Theorem.

If $\mathcal{E}$ is residual, then every $\mathcal{E}_{\varepsilon}$ is open and dense. Let $x\in X$ is a transitive point and $\varepsilon>0$. Then there exists some element $s\in G$ such that $s\,x\in \mathcal{E}_{\varepsilon/2}$, and by Proposition \ref{g}, $x\in s^{-1}\mathcal{E}_{\varepsilon/2} \subseteq\mathcal{E}_{\varepsilon}$. Thus $x\in \mathcal{E}$.

\end{proof}

\begin{prop}\label{Prop6}
Let $G$ be a countable amenable group and $X$ a compact metric space.
Let $G\curvearrowright X$ be a continuous action. If there exists $\delta>0$ such that for every non-empty open subset $U$ of $X$, there are $x,y\in U$ satisfying $D(x, y)>\delta$.
Then the dynamical system $(X, G)$ is Weyl-mean sensitivity.
\end{prop}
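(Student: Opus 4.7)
The plan is to show that the uniform constant $\delta$ provided by the hypothesis serves, after halving, as a sensitivity constant that works at every point of $X$ simultaneously. The key tool is that $D(\cdot,\cdot)$ is a pseudometric (as noted in Section~4), so it obeys the triangle inequality.

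More precisely, fix the $\delta > 0$ from the hypothesis and set $\delta_{0} = \delta/2$. Given any $x\in X$ and any $\varepsilon > 0$, I would apply the hypothesis to the non-empty open set $U = B(x,\varepsilon)$ to obtain points $u, v \in B(x,\varepsilon)$ with $D(u,v) > \delta$. By the triangle inequality for the pseudometric $D$,
\begin{equation*}
\delta < D(u,v) \leq D(u,x) + D(x,v),
\end{equation*}
so at least one of $D(x,u)$ or $D(x,v)$ exceeds $\delta/2 = \delta_{0}$. Call that point $y$; then $y \in B(x,\varepsilon)$ and $D(x,y) > \delta_{0}$. Since $\varepsilon$ was arbitrary, $x$ is a Weyl-mean sensitive point, and since $x$ was arbitrary the action $G\curvearrowright X$ is Weyl-mean sensitive.

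There is essentially no obstacle here: the argument is the standard upgrade from a \emph{pairwise} sensitivity condition inside every open set to a \emph{pointwise} sensitivity condition anchored at each $x$, which is the same bookkeeping used for ordinary sensitivity versus pairwise sensitivity. The only things one needs to verify carefully are that $D$ is symmetric and satisfies the triangle inequality (both recorded right after Definition~\ref{mmm}) and that the sensitivity constant $\delta_{0}$ produced this way is independent of $x$, which follows because the $\delta$ given in the hypothesis is uniform over all non-empty open subsets $U$.
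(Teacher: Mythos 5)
Your argument is correct and is essentially identical to the paper's own proof: the paper also applies the hypothesis to the ball $B(x,\varepsilon)$ and uses the triangle inequality for the pseudometric $D$ to conclude that one of the two resulting points is $\delta/2$-separated from $x$ (the paper merely writes the hypothesis constant as $2\delta$ instead of halving afterwards). No further comment is needed.
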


\begin{proof}
Suppose that there exists $\delta>0$ such that for any nonempty open subset $U$ of $X$, there are $u, v\in U$ satisfying
$
D(u, v)>2\delta.
$

Let $x\in X$ and $\varepsilon>0$, then $B(x,\varepsilon)\neq\emptyset$ and $B(x,\varepsilon)$ is open subset of $X$. Then there exist $y,z\in B(x,\varepsilon)\subseteq X$ satisfying
$
D(y, z)>2\delta.
$

Thus we have either
$
D(x, y)>\delta
$
or
$
D(x, z)>\delta.
$
which implies that the dynamical system $(X, G)$ is Weyl-mean sensitivity.
\end{proof}

\begin{prop}\label{Prop7}
Let $G$ be a countable amenable group and $X$ a compact metric space.
Let the action $G\curvearrowright X$ be transitive. If there exists a transitive point which is a Weyl-mean sensitivity point then the action $G\curvearrowright X$ is a Weyl-mean sensitivity.
\end{prop}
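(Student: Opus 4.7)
My plan is to reduce the proposition to Proposition \ref{Prop6} by establishing its hypothesis using the given transitive Weyl-mean sensitive point together with the $G$-invariance of the Weyl pseudometric $D$.

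Let $x_0 \in X$ be the given transitive point that is also Weyl-mean sensitive, and let $\delta > 0$ be a sensitivity constant for $x_0$. I would first verify that every nonempty open set $U \subseteq X$ contains two points whose $D$-distance exceeds $\delta$. To this end, fix such a $U$. By the transitivity of $x_0$, there exists $s \in G$ with $sx_0 \in U$; continuity of the homeomorphism $y \mapsto sy$ at $x_0$ gives $\eta > 0$ such that $sB(x_0, \eta) \subseteq U$. The Weyl-mean sensitivity of $x_0$ furnishes $y \in B(x_0, \eta)$ with $D(x_0, y) > \delta$. Setting $u = sx_0$ and $v = sy$, both points lie in $U$, and the $G$-invariance of $D$ (noted immediately after Definition \ref{mmm}) yields
$$ D(u, v) = D(sx_0, sy) = D(x_0, y) > \delta. $$

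With this uniform lower bound holding on every nonempty open set, Proposition \ref{Prop6} applies directly and produces Weyl-mean sensitivity of the whole system $(X,G)$: every $x \in X$ is a Weyl-mean sensitive point with a uniform sensitivity constant (essentially $\delta/2$, obtained from the triangle inequality for the pseudometric $D$ inside $B(x,\varepsilon)$, exactly as in the proof of Proposition \ref{Prop6}).

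The only delicate point is the $G$-invariance identity $D(sx_0, sy) = D(x_0, y)$, but I do not expect a real obstacle there: the supremum defining $D$ is taken over \emph{all} F{\o}lner sequences of $G$, and left-translation sends a F{\o}lner sequence to a F{\o}lner sequence, a fact already exploited in the proof of Proposition \ref{g}. The remainder of the argument is a routine ``transport of sensitivity along a dense orbit'' that mirrors the classical upgrade from one sensitive point to global sensitivity in topological dynamics.
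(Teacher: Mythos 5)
Your argument is correct and follows essentially the same route as the paper's proof: transport the sensitivity constant $\delta$ from the transitive Weyl-mean sensitive point into an arbitrary nonempty open set via the group element $s$ and the invariance of $D$ under right-translated F{\o}lner sequences, then conclude with Proposition \ref{Prop6}. The only cosmetic difference is that you invoke the full $G$-invariance $D(sx_0,sy)=D(x_0,y)$ stated after Definition \ref{mmm}, whereas the paper derives only the needed one-sided inequality $D(sx_0,sy)\geq D_{\mathcal{F}s^{-1}}(sx_0,sy)=D_{\mathcal{F}}(x_0,y)>\delta$ directly (and note it is \emph{right} translation of the F{\o}lner sets that does the work here).
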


\begin{proof}
Let $x\in X$ be a Weyl-mean sensitivity point. Thus there exists $\delta>0$ such that, for every $\varepsilon>0$, there is $y\in B(x,\varepsilon)$ satisfying
$
D(x, y)>\delta.
$

Given a nonempty open subset $U$ of $X$. Since $x$ is a transitive point, there exists $s\in G$ such that $s x\in U$, that is, $x\in s^{-1} U$. Furthermore, as $s^{-1} U$ is open, there is $\epsilon>0$ such that $B(x,\epsilon)\subseteq s^{-1}U$, that is, $sB(x,\epsilon)\subseteq U$. By the assumption that $x$ a Weyl-mean sensitivity point, then there exists $y\in B(x,\epsilon)$ satisfying
$
D(x, y)>\delta.
$
By the definition of $D(x, y)$, there is a (left) F{\o}lner sequence $\mathcal{F}=\{F_n\}_{n\in\mathbb{N}}$ of $G$ such that
\begin{equation*}
 D_{\mathcal{F}}(x, y)>\delta.
\end{equation*}

Let $u=s x$, $v=s y$. Noting that $\mathcal{F}s=\{F_n s^{-1}\}_{n\in\mathbb{N}}$ being also a (left) F{\o}lner sequence and $u, v\in U$, then
\begin{equation*}
D(u, v)\geq D_{\mathcal{F}s^{-1}}(u, v)=D_{\mathcal{F}s^{-1}}(s x, s y)
=D_{\mathcal{F}}(x, y)>\delta.
\end{equation*}
Therefore the action $G\curvearrowright X$ is Weyl-mean sensitivity by Proposition \ref{Prop6}.
\end{proof}

\begin{theorem}\label{relation}
Let $G$ be a countable amenable group and $X$ a compact metric space. If the action $G\curvearrowright X$ is transitive, then the action $G\curvearrowright X$ is either almost Weyl-mean equicontinuous  or Weyl-mean sensitivity.
\end{theorem}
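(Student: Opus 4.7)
The plan is a contrapositive argument: assume the transitive action is \emph{not} almost Weyl-mean equicontinuous, and deduce Weyl-mean sensitivity via Proposition \ref{Prop6}. Write $\mathcal{E}$ for the set of Weyl-mean equicontinuous points; by assumption $\mathcal{E}=\emptyset$.

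First I would show that the hypothesis $\mathcal{E}=\emptyset$ forces some $\mathcal{E}_{\varepsilon_0}$ to be empty. Recall from Proposition \ref{g} that each $\mathcal{E}_\varepsilon$ is open and satisfies $s\,\mathcal{E}_{\varepsilon/2}\subseteq \mathcal{E}_{\varepsilon}$ for every $s\in G$. If some $\mathcal{E}_{\varepsilon_0}$ is already empty, I am done with this step; otherwise assume that every $\mathcal{E}_{\varepsilon}$ with $\varepsilon>0$ is nonempty open. Then for any nonempty open $U\subseteq X$, transitivity supplies $s\in G$ with $U\cap s\,\mathcal{E}_{\varepsilon/2}\neq\emptyset$, so $U\cap \mathcal{E}_{\varepsilon}\supseteq U\cap s\,\mathcal{E}_{\varepsilon/2}\neq\emptyset$, showing that each $\mathcal{E}_\varepsilon$ is dense open. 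The Baire Category Theorem would then force $\mathcal{E}=\bigcap_{m\geq 1}\mathcal{E}_{1/m}$ to be residual and in particular nonempty, contradicting $\mathcal{E}=\emptyset$. Hence some $\varepsilon_0>0$ indeed satisfies $\mathcal{E}_{\varepsilon_0}=\emptyset$.

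Next, unpacking (\ref{xxx}), the emptiness of $\mathcal{E}_{\varepsilon_0}$ means: for every $x\in X$ and every $\delta>0$ there exist $y,z\in B(x,\delta)$ with $D(y,z)\geq \varepsilon_0$. Consequently, given any nonempty open subset $U\subseteq X$, I pick $x\in U$ and $\delta>0$ with $B(x,\delta)\subseteq U$ and obtain $y,z\in U$ with $D(y,z)\geq \varepsilon_0>\varepsilon_0/2$. Applying Proposition \ref{Prop6} with the constant $\varepsilon_0/2$ then yields that $G\curvearrowright X$ is Weyl-mean sensitive, completing the dichotomy.

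The main subtlety, rather than a genuine obstacle, lies in the Baire step: one must exploit the $G$-equivariance $s\,\mathcal{E}_{\varepsilon/2}\subseteq \mathcal{E}_{\varepsilon}$ together with transitivity to upgrade nonemptiness of each $\mathcal{E}_\varepsilon$ to density, before Baire converts ``all dense open'' into ``intersection nonempty''. Once the existence of an $\varepsilon_0$ with $\mathcal{E}_{\varepsilon_0}=\emptyset$ is in hand, the passage to Weyl-mean sensitivity is purely definitional via Proposition \ref{Prop6}, and amenability plays no further role beyond what is already packaged into the pseudometric $D(\cdot,\cdot)$.
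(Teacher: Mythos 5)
Your proof is correct, but it takes a genuinely different route from the paper's. The paper argues directly: by Proposition \ref{d} a transitive point $x$ exists, and one simply observes that $x$ is either a Weyl-mean sensitive point (whence Proposition \ref{Prop7} upgrades this to sensitivity of the whole system) or not, in which case negating the pointwise definition shows $x$ is a Weyl-mean equicontinuous point, so the system is almost Weyl-mean equicontinuous. You instead argue by contraposition through the sets $\mathcal{E}_{\varepsilon}$ of (\ref{xxx}): using Proposition \ref{g} and transitivity to get density of each nonempty $\mathcal{E}_{1/m}$, then Baire to conclude that $\mathcal{E}=\emptyset$ forces some $\mathcal{E}_{\varepsilon_0}=\emptyset$, and finally Proposition \ref{Prop6} to convert that into sensitivity. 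Your Baire step is the essential content and is handled correctly --- it is exactly what is needed to pass from ``every point fails equicontinuity at \emph{some} scale'' to a \emph{uniform} sensitivity constant $\varepsilon_0$, and it in effect re-proves part (1) of the unnumbered residuality proposition of Section 5, which you could simply have cited. The paper's route is shorter and needs only the existence of a transitive point plus Proposition \ref{Prop7}; yours is longer but bypasses Proposition \ref{Prop7} entirely and makes the uniformity of the sensitivity constant explicit. Both are valid proofs of the dichotomy.
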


\begin{proof}
Let $x\in X$ be a transitive point. If $x$ is a Weyl-mean sensitivity point, then the action $G \curvearrowright X$ is Weyl-mean sensitivity by Proposition \ref{Prop7}. If $x$ is not a Weyl-mean sensitivity point, then it is a Weyl-mean equicontinuous point. So the action $G\curvearrowright X$ is  almost Weyl-mean equicontinuous.
\end{proof}

\begin{Corollary}
Let $G$ be a countable amenable group and $X$ a compact metric space. Let $G\curvearrowright X$ be a minimal system. Then the action $G\curvearrowright X$ is either Weyl-mean sensitivity or Weyl-mean equicontinuous.
\end{Corollary}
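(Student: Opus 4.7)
The plan is to deduce this corollary directly from \th{relation} together with part (2) of the earlier proposition about transitive systems (the one asserting that minimal plus almost Weyl-mean equicontinuous forces Weyl-mean equicontinuous). The chain is very short, so the task is really just to spell out the dichotomy and then do a case analysis.

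First, I would observe that any minimal action $G\curvearrowright X$ is transitive: every orbit is dense, so in particular the set of transitive points is nonempty, and by Proposition \ref{d} the action is transitive. Hence \th{relation} applies, and gives us the two mutually exhaustive alternatives: the action is either Weyl-mean sensitive or almost Weyl-mean equicontinuous.

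In the first case there is nothing to prove. In the second case, I would invoke part (2) of the proposition following the definition of Weyl-mean sensitive point: if a minimal action is almost Weyl-mean equicontinuous, then it is Weyl-mean equicontinuous. Combining this with the dichotomy finishes the proof. There is really no obstacle here; all the substantive work has already been done in \th{relation} and in the minimal/almost-equicontinuous proposition, and the corollary is essentially a one-line synthesis of the two. The only thing worth double-checking while writing it up is that minimality is used in the right place, namely to upgrade ``almost Weyl-mean equicontinuous'' to ``Weyl-mean equicontinuous,'' rather than (redundantly) to get transitivity.
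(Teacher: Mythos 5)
Your argument is correct and is exactly the intended derivation: minimality gives transitivity, Theorem \ref{relation} gives the dichotomy between Weyl-mean sensitivity and almost Weyl-mean equicontinuity, and part (2) of the proposition on transitive systems upgrades the latter to Weyl-mean equicontinuity in the minimal case. The paper leaves this corollary without a written proof, and your synthesis matches the implicit one, including the correct placement of where minimality is actually used.
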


\section{The proof of Main Theorem}

To prove our main theorem we need some preparation. The following result please see \cite[Proposition 5.8]{LTY1}.

\begin{prop}\label{me.}
Let $(X,\beta,\mu)$ be a probability space, and $\{E_{i}\}_{i=1}^\infty$ be a sequence of measurable sets with $\mu(E_i)\ge a>0$ for some constant $a$ and any $i\in \mathbb{N}$. Then for any $k\ge1$ and $\epsilon>0$, there is $N=N(a,k,\epsilon)$ such that for any tuple $\{s_1<s_2<...<s_n\}$ with $n\ge N$, there exist $1\le t_1<...<t_k\le n$ with
$$\mu(E{_{s{_{t{_1}}}}}\cap E{_{s{_{t{_2}}}}}\cap...\cap E{_{s{_{t{_k}}}}})\ge a^k-\epsilon.$$
\end{prop}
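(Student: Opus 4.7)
The plan is a first-moment argument via Jensen's inequality applied to the counting function $g_n(x) := \sum_{i=1}^{n}\chi_{E_{s_i}}(x)$. Since $\mu$ is a probability measure and $t\mapsto t^k$ is convex on $[0,\infty)$, we have $\int g_n\,d\mu = \sum_{i=1}^n\mu(E_{s_i})\ge na$, and Jensen's inequality yields
$$
\int g_n^{k}\,d\mu \;\ge\; \Bigl(\int g_n\,d\mu\Bigr)^{k} \;\ge\; (na)^{k}.
$$

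Next, I would expand the $L^k$ norm combinatorially:
$$
\int g_n^{k}\,d\mu \;=\; \sum_{(i_1,\dots,i_k)\in[n]^{k}}\mu\bigl(E_{s_{i_1}}\cap\cdots\cap E_{s_{i_k}}\bigr),
$$
and split the sum according to whether the multi-index has pairwise distinct coordinates. The $n(n-1)\cdots(n-k+1)$ distinct-index tuples contribute exactly $k!\,S$, where
$$
S \;:=\; \sum_{1\le t_1<\cdots<t_k\le n}\mu\bigl(E_{s_{t_1}}\cap\cdots\cap E_{s_{t_k}}\bigr).
$$
The remaining tuples (those with at least one repeated entry) number at most $\binom{k}{2}n^{k-1}$ (choose the colliding pair of coordinates, then fill in the other entries freely), and each contributes at most $1$. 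Hence
$$
k!\,S \;\ge\; (na)^{k}-\binom{k}{2}n^{k-1}.
$$

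Finally, dividing by $k!\binom{n}{k}=n(n-1)\cdots(n-k+1)$ and using the elementary estimates $n^{k}\ge n(n-1)\cdots(n-k+1)\ge (n/2)^{k}$ for $n\ge 2k$, I obtain
$$
\frac{S}{\binom{n}{k}} \;\ge\; a^{k}-\frac{C_{k}}{n}
$$
for a constant $C_{k}$ depending only on $k$. Choosing $N=N(a,k,\epsilon)$ so large that $C_{k}/N<\epsilon$ and $N\ge 2k$, for every $n\ge N$ the average above exceeds $a^{k}-\epsilon$, so by pigeonhole some $1\le t_{1}<\cdots<t_{k}\le n$ satisfies $\mu(E_{s_{t_{1}}}\cap\cdots\cap E_{s_{t_{k}}})\ge a^{k}-\epsilon$, as required. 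The only delicate point is the combinatorial bookkeeping: one must verify that the diagonal contribution from tuples with repeated indices is genuinely of order $O(n^{k-1})$ with a constant depending only on $k$, so that $N$ depends only on $a$, $k$, and $\epsilon$ and not on the particular sequence $s_1<\cdots<s_n$.
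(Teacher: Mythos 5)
Your proof is correct; note that the paper itself gives no argument for this proposition, quoting it directly from \cite[Proposition 5.8]{LTY1}, whose proof is essentially the same first-moment argument you give (a power-mean/Jensen inequality applied to the counting function $\sum_{i}\chi_{E_{s_i}}$, followed by splitting off the diagonal terms). Your combinatorial bookkeeping is sound: the repeated-index tuples number at most $\binom{k}{2}n^{k-1}$, and after dividing by $n(n-1)\cdots(n-k+1)\ge (n/2)^{k}$ for $n\ge 2k$ the diagonal contributes an error of size at most $\binom{k}{2}2^{k}/n$, uniformly in the sets and in the choice of $s_1<\cdots<s_n$, so $N$ indeed depends only on $k$ and $\epsilon$ (and a fortiori only on $a,k,\epsilon$).
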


Let $(X, d)$ be a compact metric space, with Borel $\sigma$-algebra $\mathscr{B}$. Denote by $\mathcal{M}(X)$ the space of Borel probability measures on $X$. Our main interest is its weak-$*$ topology of space $\mathcal{M}(X)$. It is standard, a convenient source is Pathasarathy \cite{P}.

\begin{theorem}\label{MP}
Let $X$ be compact metric space and
$\{\mu_n\}$ be a sequence of probability measures in $\mathcal{M}(X)$. Let $\mu\in\mathcal{M}(X)$. Then the following statements are equivalent:
\begin{enumerate}
  \item $\{\mu_n\}$ converges to $\mu$ with weak-$*$ topology in $\mathcal{M}(X)$;
  \item $\lim_{n\rightarrow\infty}\int f d \mu_n=\int f d\mu$ for all $f\in C(X)$;
  \item $\limsup_{n\rightarrow\infty} \mu_n(C) \leq \mu(C)$ for every closed set $C$;
  \item $\liminf_{n\rightarrow\infty} \mu_n(G) \geq \mu(G)$ for every open set $G$;
  \item $\lim_{n\rightarrow\infty} \mu_n(A)=\mu(A)$ for every Borel set $A$ whose boundary has $\mu$-measure 0.
\end{enumerate}
\end{theorem}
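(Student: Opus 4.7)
The statement is the classical Portmanteau theorem for weak-$*$ convergence of Borel probability measures on a compact metric space; the natural strategy is to set up a cycle of implications $(1)\Leftrightarrow(2)\Rightarrow(3)\Leftrightarrow(4)\Rightarrow(5)\Rightarrow(2)$. The equivalence of (1) and (2) is essentially the definition of the weak-$*$ topology on $\mathcal{M}(X)$: since $C(X)$ separates measures on a compact metric space and the weak-$*$ topology is generated by the functionals $\mu\mapsto\int f\,d\mu$ for $f\in C(X)$, both statements are restatements of the same convergence. The equivalence of (3) and (4) is a direct complementation argument, using $\mu_n(X)=\mu(X)=1$ so that $\mu_n(G)=1-\mu_n(X\setminus G)$ and taking $\liminf/\limsup$.

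To prove $(2)\Rightarrow(3)$, I would fix a closed set $C\subseteq X$ and, for each $k\in\mathbb{N}$, define the Urysohn-type function $f_k(x)=\max(1-k\cdot d(x,C),0)$, which is continuous, bounded by $1$, equals $1$ on $C$ and decreases pointwise to $\mathbf{1}_C$ as $k\to\infty$ (here we use that $C$ is closed, hence $d(x,C)>0$ off $C$). Since $\mathbf{1}_C\leq f_k$, we get $\mu_n(C)\leq\int f_k\,d\mu_n$; applying (2) and then monotone convergence to $\mu$ gives
\[
\limsup_{n\to\infty}\mu_n(C)\leq\int f_k\,d\mu\;\xrightarrow{k\to\infty}\;\mu(C).
\]

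For $(3)\wedge(4)\Rightarrow(5)$, take a Borel set $A$ with $\mu(\partial A)=0$, so $\mu(\overline{A})=\mu(\mathrm{int}(A))=\mu(A)$. Combining the two inequalities,
\[
\mu(A)=\mu(\mathrm{int}(A))\leq\liminf_{n\to\infty}\mu_n(\mathrm{int}(A))\leq\limsup_{n\to\infty}\mu_n(\overline{A})\leq\mu(\overline{A})=\mu(A),
\]
which forces $\lim_n\mu_n(A)=\mu(A)$.

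The main nontrivial step is $(5)\Rightarrow(2)$, where we must recover convergence of integrals from convergence on continuity sets of $\mu$. For $f\in C(X)$ we may assume (by adding a constant and rescaling) that $0\leq f<1$. For $t\in(0,1)$ the level set $\{f=t\}$ has positive $\mu$-measure for at most countably many $t$, and $\partial\{f>t\}\subseteq\{f=t\}$, so for all but countably many $t$ the set $A_t=\{f>t\}$ satisfies $\mu(\partial A_t)=0$ and hence $\mu_n(A_t)\to\mu(A_t)$ by (5). Given $\varepsilon>0$ pick $0=t_0<t_1<\cdots<t_m=1$ avoiding the exceptional countable set with mesh $<\varepsilon$. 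Using the layer-cake identity $\int f\,d\nu=\int_0^1\nu(\{f>t\})\,dt$ together with the Riemann sum approximation $\sum_{j}(t_{j+1}-t_j)\nu(A_{t_j})$, the bounded convergence $\mu_n(A_{t_j})\to\mu(A_{t_j})$ for each $j$ yields $\int f\,d\mu_n\to\int f\,d\mu$ up to an error of order $\varepsilon$; letting $\varepsilon\to0$ finishes the argument. The technical point to be careful about here is justifying the passage between the layer-cake integral and its finite Riemann approximation uniformly in $n$, which follows because the integrands $t\mapsto\mu_n(A_t)$ are uniformly bounded by $1$ and monotone decreasing in $t$.
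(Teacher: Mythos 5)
Your proof is correct and complete. Note, however, that the paper does not actually prove Theorem \ref{MP}: it is the classical Portmanteau theorem, stated without proof and attributed to Parthasarathy's book \cite{P}, so there is no in-paper argument to compare against. Your cycle $(1)\Leftrightarrow(2)\Rightarrow(3)\Leftrightarrow(4)\Rightarrow(5)\Rightarrow(2)$ is the standard one, and each step checks out: the Urysohn-type functions $f_k(x)=\max(1-k\,d(x,C),0)$ do decrease to $\mathbf{1}_C$ for closed $C$, so dominated convergence gives $(2)\Rightarrow(3)$; the sandwich $\mu(\mathrm{int}\,A)\leq\liminf\mu_n(\mathrm{int}\,A)\leq\liminf\mu_n(A)\leq\limsup\mu_n(A)\leq\limsup\mu_n(\overline{A})\leq\mu(\overline{A})$ (which you compress slightly, but validly) gives $(5)$; and in $(5)\Rightarrow(2)$ the layer-cake argument works because $\partial\{f>t\}\subseteq\{f=t\}$, only countably many level sets can carry positive $\mu$-mass, and the maps $t\mapsto\mu_n(\{f>t\})$ are decreasing and bounded by $1$, so the Riemann-sum error is at most the mesh uniformly in $n$ --- exactly the point you flag. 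The only cosmetic remark is that for $(1)\Leftrightarrow(2)$ the separation property of $C(X)$ is needed only for uniqueness of weak-$*$ limits, not for the equivalence of the two convergence statements, which holds by definition of the weak-$*$ topology. Your write-up could serve as a self-contained substitute for the external citation.
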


In order to obtain our results, we need the following elemental fact.

\begin{fact}\label{MP003}
Let $\{a_n\}$ and $\{b_n\}$ be two sequences of real numbers. Suppose that $\lim_{n\rightarrow\infty} b_n$ exists and $\limsup_{n\rightarrow\infty} a_n$ is finite. Then
\begin{equation*}
  \limsup_{n\rightarrow\infty}\  (a_n+b_n)=\limsup_{n\rightarrow\infty} a_n+\lim_{n\rightarrow\infty} b_n.
\end{equation*}
\end{fact}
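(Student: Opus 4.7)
The plan is to prove Fact \ref{MP003} by establishing two inequalities separately, using only the standard subadditivity of $\limsup$ together with the trick of rewriting $a_n = (a_n+b_n) + (-b_n)$ to get the reverse inequality. Let me set $L = \lim_{n\to\infty} b_n$ and $M = \limsup_{n\to\infty} a_n$, where $M$ is finite by hypothesis.

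For the inequality $\limsup_{n\to\infty}(a_n+b_n) \leq M+L$, I would fix $\varepsilon>0$ and use convergence of $\{b_n\}$ to pick $N_1$ so that $|b_n - L| < \varepsilon/2$ for $n \geq N_1$. Then for any $n \geq N_1$, monotonicity of $\sup$ gives
\begin{equation*}
\sup_{k \geq n}(a_k + b_k) \leq \sup_{k \geq n} a_k + \sup_{k \geq n} b_k \leq \sup_{k \geq n} a_k + L + \varepsilon/2.
\end{equation*}
Letting $n\to\infty$ yields $\limsup_{n\to\infty}(a_n+b_n) \leq M + L + \varepsilon/2$, and since $\varepsilon$ was arbitrary the one-sided bound follows.

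For the reverse inequality, I apply exactly the same estimate to the pair of sequences $\{a_n+b_n\}$ and $\{-b_n\}$, noting that $-b_n$ converges to $-L$ and that $\limsup_{n\to\infty}(a_n+b_n)$ is finite (bounded above by the previous step, and bounded below since $a_n+b_n \geq a_n - |b_n|$ with $b_n$ bounded). Substituting into the inequality already proved gives
\begin{equation*}
M = \limsup_{n\to\infty} a_n = \limsup_{n\to\infty}\bigl((a_n+b_n)+(-b_n)\bigr) \leq \limsup_{n\to\infty}(a_n+b_n) - L,
\end{equation*}
which rearranges to $M + L \leq \limsup_{n\to\infty}(a_n+b_n)$. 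Combining the two bounds gives equality.

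There is no real obstacle here, since this is a standard elementary fact; the only mild subtlety is making sure the reverse direction is handled via the algebraic rewrite rather than an independent argument, and verifying that the quantities involved are finite so that the arithmetic of extended reals causes no trouble. Since $M$ is finite and $L$ is a genuine limit (hence finite), both sides of the claimed equality lie in $\mathbb{R}$ and the manipulations above are legitimate.
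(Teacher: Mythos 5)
Your proof is correct. The paper states Fact \ref{MP003} without any proof, treating it as elementary, so there is nothing to compare against; your two-sided argument --- subadditivity of $\limsup$ for the upper bound, and the rewrite $a_n=(a_n+b_n)+(-b_n)$ applied to the same inequality for the lower bound, together with the finiteness checks that make the arithmetic legitimate --- is the standard complete argument.
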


The following result is the well-known Furstenberg corresponding principle \cite{F1} of the amenable group version.
\begin{prop}\label{mea.}
Let $G$ be a countable amenable group and
$F$ be a subset of $G$ with BD$^*(F)>0$. Then for any $k\ge 1$ and $\epsilon>0$, there is $N=N({\rm {BD}}^*(F), k, \epsilon)$ such that for any $n\geq N$ and any tuple $\{s_1, s_2, \cdots , s_n\}\subseteq G$, there exist $\{1\leq t_1<t_2<\cdots<t_k\leq n\}$ such that
$$ {\rm {BD}}^*({s_{t_1}}^{-1}F\cap {s_{t_2}}^{-1}F\cap\cdots\cap {s_{t_k}}^{-1}F)\ge({\rm {BD}}^*(F))^k-\epsilon.$$
\end{prop}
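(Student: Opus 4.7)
My plan is to lift the statement from the combinatorial setting of Banach densities to a measure-theoretic one via a Furstenberg-type correspondence principle for amenable groups (cf.\ \cite{F1}), and then apply Proposition \ref{me.} inside the resulting probability-measure-preserving system.

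\emph{Setting up the correspondence.} Set $a:={\rm BD}^*(F)>0$. Using formula (\ref{ppt}) (and Proposition \ref{right} to absorb right translates), choose a left F{\o}lner sequence $\{H_n\}_{n\in\mathbb{N}}$ of $G$ with $\lim_{n\to\infty}|F\cap H_n|/|H_n|=a$. Inside the compact metrizable space $\{0,1\}^G$ carrying the shift action of $G$, let $Y$ denote the orbit closure of the indicator $\mathbf{1}_F$, and put $A:=\{y\in Y:y(e)=1\}$, a clopen subset of $Y$. Form the empirical averages
\begin{equation*}
  \mu_n:=\frac{1}{|H_n|}\sum_{g\in H_n}\delta_{g\cdot\mathbf{1}_F}\in\mathcal{M}(Y),
\end{equation*}
and, by weak-$*$ compactness, extract a subsequential limit $\mu\in\mathcal{M}(Y)$. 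The F{\o}lner property forces $\mu$ to be $G$-invariant, and a direct count (after fixing the action convention) gives $\mu_n(A)=|F\cap H_n|/|H_n|\to a$, so $\mu(A)=a$. More generally, for any $g_1,\dots,g_k\in G$ the set $\bigcap_{i=1}^{k} g_i^{-1}A$ is clopen, hence Theorem \ref{MP}(5) combined with (\ref{ppt}) yields
\begin{equation*}
  \mu\!\left(\bigcap_{i=1}^{k}g_i^{-1}A\right)=\lim_{n}\frac{\bigl|\bigcap_{i=1}^{k}g_i^{-1}F\cap H_n\bigr|}{|H_n|}\;\le\;{\rm BD}^*\!\left(\bigcap_{i=1}^{k}g_i^{-1}F\right),
\end{equation*}
which is the one-sided density estimate underlying the correspondence principle.

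\emph{Applying Proposition \ref{me.}.} Fix $\{s_1,\dots,s_n\}\subseteq G$ and define $E_i:=s_i^{-1}A\subseteq Y$. By $G$-invariance of $\mu$, each $E_i$ has $\mu(E_i)=\mu(A)=a$. Let $N:=N(a,k,\epsilon)$ be the constant supplied by Proposition \ref{me.} applied to $(Y,\mathscr{B},\mu)$ with the family $\{E_i\}$. For $n\geq N$, Proposition \ref{me.} produces indices $1\leq t_1<\cdots<t_k\leq n$ with $\mu(E_{t_1}\cap\cdots\cap E_{t_k})\geq a^k-\epsilon$, and combining this with the correspondence inequality above gives
\begin{equation*}
  {\rm BD}^*(s_{t_1}^{-1}F\cap\cdots\cap s_{t_k}^{-1}F)\;\geq\;\mu(E_{t_1}\cap\cdots\cap E_{t_k})\;\geq\;({\rm BD}^*(F))^{k}-\epsilon.
\end{equation*}
Since $N$ depends only on $a={\rm BD}^*(F)$, $k$, and $\epsilon$, the proposition follows.

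\emph{Main obstacle.} The nontrivial part is the correspondence itself: one must verify $G$-invariance of the limit measure $\mu$ from the F{\o}lner property of $\{H_n\}$ and keep the left/right shift and inversion conventions consistent so that the clopen frequency $\mu_n(\bigcap_{i}g_i^{-1}A)$ really matches $|\bigcap_{i}g_i^{-1}F\cap H_n|/|H_n|$. Both are standard manipulations, but this is the place where the bookkeeping lives. Once the correspondence is in place, Proposition \ref{me.} supplies the combinatorial content and the conclusion follows by translation back into ${\rm BD}^*$-language.
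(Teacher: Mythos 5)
Your proposal is correct and follows essentially the same route as the paper's own proof: the paper likewise builds the orbit closure of the indicator configuration in $\{0,1\}^G$, chooses a F{\o}lner sequence of translated sets $F_n=H_ng_n$ realizing ${\rm BD}^*(F)$ as a genuine limit, extracts a $G$-invariant weak-$*$ limit of the empirical measures, applies Proposition \ref{me.} to the translates of the clopen cylinder at $e$, and transfers back to upper Banach density via Theorem \ref{MP}(5) and formula (\ref{ppt}). The only differences are notational (the paper marks membership in $F$ by the symbol $0$ rather than $1$).
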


\begin{proof}
Let $K=\{0, 1\}$ be a finite alphabet. We define the map  $\Sigma: G\times K^G \rightarrow K^G$ by $\Sigma(g,x):=x \circ {R_g}$, for all $g\in $G and $x\in K^G$. Here $R_g: G \rightarrow G$ is defined by $R_g(h):=hg$ for all $h \in G$.
Let $s\in G$ and $x\in K^G$. Thus $s x(g)=x(g s)$ for all $g\in G$.

Given $\xi\in K^G $ satisfying $\xi(s)=0$ for all $s\in F$ and $\xi(s)=1$ for all $s\in G\backslash F$.
Denote by
\begin{equation*}
 X=\overline{\{s\xi: s\in G\}} \quad {\text {the orbit closure with respect to }}\  K^G.
\end{equation*}
It is clear that $X$ is a compact metric space. Meanwhile,
it is follows that the set $\{s\in G: \xi(s)=0\}=F$ has positive upper Banach density.

Let $\{H_{n}\}$ be a (left) F{\o}lner sequence for $G$.
By a formula of upper Banach density (see \cite[Lemma 2.9]{DHZ1}), we have
$$\lim_{n\rightarrow\infty}\sup_{g\in G}\frac{\mid F\cap H_{n}g\mid}{\mid H_{n}g\mid}={\rm {BD}}^*(F)>0.$$

Let $\kappa>0$. For the above limit equation, there is $N_1\in\mathbb{N}$ such that, for every $n\geq N_1$, one has
$${\rm {BD}}^*(F)-\frac{\kappa}{2}<\sup_{g\in G}\frac{\mid F\cap H_{n}g\mid}{\mid H_{n}g\mid}<{\rm {BD}}^*(F)+\frac{\kappa}{2}.$$
Thus, for each $n\geq N_1$, there is $g_{n}\in G$ such that
$$
{\rm {BD}}^*(F)-\kappa<\sup_{g\in G}\frac{\mid F\cap H_{n}g\mid}{\mid H_{n}g\mid}-\frac{\kappa}{2}<\frac{\mid F\cap H_{n}g_{n}\mid}{\mid H_{n}g_{n}\mid}\leq\sup_{g\in G}\frac{\mid F\cap H_{n}g\mid}{\mid H_{n}g\mid}+\frac{\kappa}{2}<{\rm {BD}}^*(F)+\kappa.
$$
That means
$$
\lim_{n\rightarrow\infty}\frac{\mid F\cap H_{n}g_{n}\mid}{\mid H_{n}g_{n}\mid}={\rm {BD}}^*(F)>0.
$$

Set $F_{n}=H_{n}g_{n}$ for all $n\in\mathbb{N}$. So $\{F_{n}\}_{n\in\mathbb{N}}$ is a (left) F{\o}lner sequence with
\begin{equation}\label{MP004}
  \lim_{n \rightarrow \infty}\frac{|F_{n}\cap F|}{|F_{n}|}={\rm {BD}}^*(F).
\end{equation}

Define a sequence of probability measures in $\mathcal{M}(X)$ as follows:
\begin{equation*}
  \mu_n:=\frac{1}{|F_n|}\sum_{s\in F_n} \delta_{s\, \xi}
\end{equation*}
where $\delta_{s\,\xi}$ is Dirac measure at the point $s\,\xi$ in $X$.

Since $\mathcal{M}(X)$ is compact metrizable space(Theorem 6.3 in \cite{P}, p43), there exists a subsequence $\{\mu_{n_l}\}_{l\in\mathbb{N}}$ converges to a probability measure $\nu$ with weak-$*$ topology in $\mathcal{M}(X)$, that is,
\begin{equation*}
  \mu_{n_l}=\frac{1}{|F_{n_l}|}\sum_{s\in F_{n_l}} \delta_{s\, \xi}\xrightarrow{{\text {weak-}}*} \nu.
\end{equation*}

In what follows we will show that $\nu$ is a $G$-invariant probability measure, that is, $\nu=g\nu$ for each $g\in G$.

Let $g\in G$. From (3) of Theorem \ref{MP}, it is easy to check that
\begin{equation}\label{MP002}
  g \mu_{n_l}\xrightarrow{{\text {weak-}}*} g\nu.
\end{equation}

\noindent {\bf {Claim.}}\  For any Borel set $B$ of $X$, one has
\begin{equation}\label{MP001}
  \lim_{l\rightarrow\infty} \big(g\mu_{n_l}(B)-\mu_{n_l}(B)\big)=0.
\end{equation}
In fact, since $\{F_{n_l}\}_{l\in\mathbb{N}}$ is a F{\o}lner sequence of $G$, we have
\begin{equation*}
  \begin{split}
   |g\mu_{n_l}(B)-\mu_{n_l}(B)|  & =|\mu_{n_l}(g^{-1}B)-\mu_{n_l}(B)|
       =\frac{1}{|F_{n_l}|}\left(\sum_{s\in F_{n_l}}\delta_{s\, \xi}(g^{-1}B)-\sum_{s\in F_{n_l}}\delta_{s\, \xi}(B)\right)\\
      & =\frac{1}{|F_{n_l}|}\left(\sum_{s\in F_{n_l}}\delta_{(gs)\, \xi}(B)-\sum_{s\in F_{n_l}}\delta_{s\, \xi}(B)\right)\\
      & =\frac{1}{|F_{n_l}|}\left(\sum_{s\in g F_{n_l}}\delta_{s\, \xi}(B)-\sum_{s\in F_{n_l}}\delta_{s\, \xi}(B)\right)\\
      & \leq \frac{|g F_{n_l}\Delta F_{n_l}|}{|F_{n_l}|}\rightarrow 0\quad {\text {as  }}\   l\rightarrow\infty.
  \end{split}
\end{equation*}
Hence the claim is obtained.

Now we will prove $g\nu=\nu$. Let $C$ be any closed subset of $X$. From (\ref{MP001}), Fact \ref{MP003} and (3) of Theorem \ref{MP}, we have
\begin{equation*}
  \begin{split}
  \limsup_{l\rightarrow\infty} g\mu_{n_l}(C)&=\limsup_{l\rightarrow\infty} g\mu_{n_l}(C)+\lim_{l\rightarrow\infty} \big(\mu_{n_l}(C)-g\mu_{n_l}(C)\big) \\
      & =\limsup_{l\rightarrow\infty} \mu_{n_l}(C)\leq \nu(C).
  \end{split}
\end{equation*}
Applying (3) of Theorem \ref{MP} again, we get
\begin{equation*}
  g \mu_{n_l}\xrightarrow{{\text {weak-}}*} \nu.
\end{equation*}
Combining with (\ref{MP002}), we have $g\nu=\nu$. So $\nu$ is a $G$-invariant measure.

Denote by $e$ the unit element of the group $G$.
We define $A(0)=\{\eta\in K^G : \eta(e)=0\}\cap X$. Since $\{\eta\in K^G : \eta(e)=0\}$ is a clopen subset of $K^G$, it follows that
$A(0)$ is a clopen subset of $X$. So the boundary of $A(0)$ is empty set, that is, $\partial (A(0))=\emptyset$. From (5) of Theorem \ref{MP},  we have
$$
\nu(A(0))=\lim_{l \rightarrow \infty}\mu_{n_l}(A(0))=\lim_{l \rightarrow \infty}\frac{1}{|F_{n_l} | }\sum_{s\in F_{n_l}}\delta_{s\, \xi}(A(0)).
$$
Note that $s \,\xi\in A(0)\Leftrightarrow\xi(s)=0\Leftrightarrow s\in F$. So, by (\ref{MP004}), we get
\begin{equation}\label{MP005}
 \nu (A(0))=\lim_{l \rightarrow \infty}\frac{1}{|F_{n_l} | }\sum_{s\in F_{n_l}}\delta_{s\, \xi}(A(0))=\lim_{l \rightarrow \infty}\frac{|F\cap F_{n_l}|}{|F_{n_l} | }={\rm {BD}}^*(F).
\end{equation}

Since $G$ is countable, we list $G$ as $G=\{g_i\}_{i=1}^{\infty}$. Denote by $E_i=g_i^{-1} A(0)$ for each $i\in \mathbb{N}$. Owing to $\nu$ being
$G$-invariant, we deduce that $\nu(E_i)=\nu(g_i^{-1} A(0))=\nu(A(0))={\rm {BD}}^*(F)$ for each $i\in \mathbb{N}$.

Let $k\geq 1$ and $\epsilon>0$. From Proposition \ref{me.}, there is $N=N({\rm {BD}}^*(F),k,\epsilon)$ such that, for any $n\geq N$ and any tuple $\{s_1, s_2, \cdots , s_n\}\subseteq G$, there exist $\{1\leq t_1<t_2<\cdots <t_k\leq n\}$ satisfying
\begin{equation}\label{ktime}
\nu(s{_{t{_1}}}^{-1}A(0)\cap s{_{t{_2}}}^{-1}A(0)\cap...\cap s{_{t{_k}}}^{-1}A(0))\ge \nu(A(0))^k-\epsilon.
\end{equation}
Set $B=s{_{t{_1}}}^{-1}A(0)\cap s{_{t{_2}}}^{-1}A(0)\cap...\cap s{_{t{_k}}}^{-1}A(0)$. It is easy to see that
$$
s\xi\in s_{t_i}^{-1}A(0)\Leftrightarrow (s_{t_i}s)\xi\in A(0)\Leftrightarrow (s_{t_i}s)\xi(e)=0\Leftrightarrow \xi(s_{t_i}s)=0 \Leftrightarrow s\in {s_{t_i}}^{-1}F.
$$
Since $A(0)$ is a clopen subset of $X$, the set $B$ is also clopen in $X$. Therefore, the boundary of $B$ is empty set, that is, $\partial(B)=\emptyset$.
Thus, by the equation (\ref{ppt}) and (5) of Theorem \ref{MP}, we have
\begin{equation}\label{Bade}
\begin{split}
&{\rm {BD}}^*\left({s^{-1}_{t_1}}F\cap {s^{-1}_{t_2}}F\cap...\cap {s^{-1}_{t_k}}F\right)\\
&\geq\limsup_{l \rightarrow \infty}\frac{|F_{n_l}\cap({s^{-1}_{t_1}}F\cap {s^{-1}_{t_2}}F\cap...\cap {s^{-1}_{t_k}}F)|}{|F_{n_l}|}\\
&=\limsup_{l \rightarrow \infty}\frac{1}{|F_{n_l}|}\sum_{s\in F_{n_l}}\delta_{s\, \xi}(B)=\limsup_{l \rightarrow \infty}\mu_{n_l}(B)\\
&=\nu(B).
\end{split}
\end{equation}
Applying (\ref{MP005}), (\ref{ktime}) and (\ref{Bade}), we get
\begin{equation*}
 {\rm {BD}}^*\left({s^{-1}_{t_1}}F\cap {s^{-1}_{t_2}}F\cap...\cap {s^{-1}_{t_k}}F\right)\geq {\rm {BD}}^*(F)^k-\epsilon.
\end{equation*}
Hence the proposition is obtained.
\end{proof}

\begin{lemma}\label{BD}
Let $G$ be a countably infinite amenable group, $S$ be a subset of $G$ with ${\rm {BD}}^{*}(S)>0$ and $W\subseteq G$ be an infinite set (i.e. $|W|=\infty$). Then there are two distinct elements $l_{1}, l_{2}\in W$ such that
$${\rm {BD}}^{*}(l_{1}^{-1}S\cap l_{2}^{-1}S)\geq \frac{{\rm {BD}}^{*}(S)^{2}}{2}.$$
\end{lemma}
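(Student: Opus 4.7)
The plan is to prove this lemma by a direct application of Proposition \ref{mea.} (the amenable group version of the Furstenberg correspondence principle) with $k = 2$. That proposition already produces $k$-fold intersections of shifted copies of a set of positive upper Banach density whose intersection has upper Banach density arbitrarily close to ${\rm BD}^*(S)^k$; the lemma is the specialization $k = 2$ with the slack $\epsilon$ chosen to give exactly the constant $\frac{{\rm BD}^*(S)^2}{2}$, combined with the observation that $W$ being infinite allows the source indices to be chosen from $W$.

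First, I would set $\epsilon := \frac{{\rm BD}^*(S)^2}{2}$, which is strictly positive since ${\rm BD}^*(S) > 0$. Then by Proposition \ref{mea.} applied with $F = S$ and $k = 2$, there exists $N = N({\rm BD}^*(S), 2, \epsilon) \in \mathbb{N}$ such that for any $n \geq N$ and any $n$-tuple $\{s_1, s_2, \ldots, s_n\} \subseteq G$, one can find indices $1 \leq t_1 < t_2 \leq n$ satisfying
\begin{equation*}
{\rm BD}^*\bigl(s_{t_1}^{-1} S \cap s_{t_2}^{-1} S\bigr) \geq {\rm BD}^*(S)^2 - \epsilon = \frac{{\rm BD}^*(S)^2}{2}.
\end{equation*}

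Next, since $W$ is infinite, I would select $n := N$ distinct elements $s_1, s_2, \ldots, s_n \in W$. Applying the conclusion of Proposition \ref{mea.} to this tuple yields indices $t_1 < t_2$ and hence two distinct elements $l_1 := s_{t_1}$ and $l_2 := s_{t_2}$ of $W$ with
\begin{equation*}
{\rm BD}^*(l_1^{-1} S \cap l_2^{-1} S) \geq \frac{{\rm BD}^*(S)^2}{2},
\end{equation*}
which is exactly the desired inequality.

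There is essentially no obstacle here beyond invoking the right statement; the proof reduces to a bookkeeping step. The only small subtlety worth flagging is that the elements $l_1, l_2$ are guaranteed to be distinct because $t_1 < t_2$ and the $s_i$ are chosen pairwise distinct from $W$. All the real work — namely building a $G$-invariant measure and applying the measure-theoretic $k$-fold intersection estimate of Proposition \ref{me.} — has already been carried out in the proof of Proposition \ref{mea.}.
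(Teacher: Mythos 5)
Your proof is correct and is essentially identical to the paper's own argument: both apply Proposition \ref{mea.} with $k=2$ and $\epsilon={\rm BD}^*(S)^2/2$ to $N$ distinct elements enumerated from $W$, then set $l_1=s_{t_1}$, $l_2=s_{t_2}$. No further comment is needed.
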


\begin{proof}
Set $W=\{s_1, s_2, \cdots ,s_n, \cdots\}$ ($s_i\not=s_j$ for $i\not=j$).  By Proposition \ref{mea.}, taking
\begin{equation*}
 k=2, \epsilon={\rm {BD}}^{*}(S)^{2}/2, \quad {\text {and }}\quad  N=N({\rm {BD}}^{*}(S), 2, {\rm {BD}}^{*}(S)^{2}/2)
\end{equation*}
as in Proposition \ref{mea.}, for the tuple $\{s_1, s_2, \cdots , s_n\}\subseteq W$ with $n\geq N$ there exist $1\leq t_{1}<t_{2}\leq n$ such that
$${\rm {BD}}^{*}(s_{t_{1}}^{-1}S\cap s_{t_{2}}^{-1}S)\geq\frac{1}{2}{\rm {BD}}^{*}(S)^{2}>0.$$

Let $l_1=s_{t_{1}}$ and $l_2=s_{t_{2}}$. Then the proof is completed.
\end{proof}

The main result in this section is:
\begin{theorem}\label{hto.}
Let $G$ be a countably infinite abelian group and $X$ be a compact metric space without isolated points.
Suppose the action $G\curvearrowright X$ is transitive. If $h_{{\rm {top}}}(X, G)>0$, then the action $G\curvearrowright X$ is Weyl-mean sensitive.
\end{theorem}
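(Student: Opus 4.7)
I argue by contradiction via Theorem \ref{relation}. Suppose $(X,G)$ is \emph{not} Weyl-mean sensitive; then it is almost Weyl-mean equicontinuous, and the first proposition of Section 5 forces every transitive point to be Weyl-mean equicontinuous. Since $X$ has no isolated points, Propositions \ref{d} and \ref{e} combine to yield a transitive recurrent point $z\in X$, which is therefore Weyl-mean equicontinuous. The aim is to produce, for every $\epsilon>0$, a pair $p,q\in B(z,\epsilon)$ with $D(p,q)\geq C$ for an absolute constant $C>0$; combined with the triangle inequality $D(p,q)\leq D(z,p)+D(z,q)$ and the Weyl-mean equicontinuity of $z$ applied at a threshold below $C/2$, this yields the contradiction.

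\textbf{Combinatorial input from positive entropy.} Theorem \ref{d02} applied to $h_{{\rm top}}(X,G)>0$ furnishes a non-diagonal IE-pair $(x_1,x_2)\in {\rm IE}_2(X,G)\setminus \Delta_2(X)$. Set $\eta=d(x_1,x_2)/4$ and $U_i=B(x_i,\eta)$, so $d(U_1,U_2)\geq 2\eta$. By Definition \ref{b}, $c:=I(U_1,U_2)>0$, and Proposition \ref{c} combined with formula \eqref{ppt} extracts an independence set $J\subseteq G$ for $(U_1,U_2)$ with ${\rm BD}^*(J)\geq c$. The target constant will be $C=c^2\eta$.

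\textbf{Core construction.} Fix $\epsilon>0$. Recurrence of $z$ makes $R_\epsilon=\{g\in G:gz\in B(z,\epsilon)\}$ infinite, so Lemma \ref{BD} applied to $S=J$ and $W=R_\epsilon$ yields distinct $l_1,l_2\in R_\epsilon$ with ${\rm BD}^*(J')\geq c^2/2$ where $J':=l_1^{-1}J\cap l_2^{-1}J$; after passing to a cofinite subset, assume $l_1J'\cap l_2J'=\emptyset$. For every finite $H\subseteq J'$, the independence of $J$ for $(U_1,U_2)$ with coloring $\omega(l_is)=i$ makes
$$ O_H=\bigcap_{s\in H}(l_1s)^{-1}U_1\,\cap\,\bigcap_{s\in H}(l_2s)^{-1}U_2 $$
nonempty and open. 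Transitivity of $z$ gives $g_H\in G$ with $g_Hz\in O_H$; setting $y_H=g_Hz$ and using commutativity of $G$, the pair $(p_H,q_H)=(l_1y_H,l_2y_H)$ satisfies $sp_H=(l_1s)y_H\in U_1$ and $sq_H=(l_2s)y_H\in U_2$ for every $s\in H$. A compactness/diagonal argument as $H$ exhausts $J'$ yields a limit pair $(p,q)$ with $sp\in\overline{U_1}$ and $sq\in\overline{U_2}$ for all $s\in J'$, so $d(sp,sq)\geq 2\eta$ on $J'$; choosing a F\o lner sequence realizing ${\rm BD}^*(J')$ via \eqref{ppt} gives $D(p,q)\geq 2\eta\cdot {\rm BD}^*(J')\geq c^2\eta$.

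\textbf{Main obstacle.} The delicate step is forcing $(p,q)\in B(z,\epsilon)\times B(z,\epsilon)$. Because both coordinates have the form $l_iy_H$ for a common $y_H$, and because $l_1,l_2\in R_\epsilon$ means $l_1z,l_2z$ are $\epsilon$-close to $z$, continuity of the fixed homeomorphisms $l_1,l_2$ at $z$ would place $l_1y_H,l_2y_H$ near $z$ provided $y_H$ itself is near $z$. The obstruction is that transitivity alone provides $g_H$ with $g_Hz\in O_H$ but not $g_Hz\in O_H\cap B(z,\delta_H)$, since the two open sets need not overlap. To overcome this one refines the construction: for each $H$ pick $\delta_H\downarrow 0$ and apply topological transitivity between the nonempty open sets $O_H$ and $B(z,\delta_H)$ to find $v_H\in B(z,\delta_H)$ and $g_H\in G$ with $g_Hv_H\in O_H$; commutativity converts this to $(l_ig_H)(sv_H)\in U_i$, shifting the orbit information onto the point $v_H$ (which is close to $z$) at the cost of replacing the fixed shifts $l_i$ by $l_ig_H$. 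A diagonal passage to the limit along $H_n\uparrow J'$ and $\delta_{H_n}\downarrow 0$, combined with the abelian structure to resynchronize the shifts at the limit, produces $y^*=\lim v_{H_n}=z$ with a surviving orbit property on all of $J'$, so that $(p,q)$ falls inside $B(z,\epsilon)\times B(z,\epsilon)$ with $D(p,q)\geq c^2\eta$, completing the contradiction.
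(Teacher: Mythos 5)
Your setup (the dichotomy from Theorem \ref{relation}, the IE-pair and independence set $J$ from Theorem \ref{d02} and Proposition \ref{c}, the transitive recurrent point, and Lemma \ref{BD} producing $l_1,l_2$ with $l_1z,l_2z\in B(z,\epsilon)$) matches the paper. But the core construction has a genuine gap, and it is exactly the one you flag as the ``main obstacle'': your candidate bad pair $(p,q)$ is a limit of $(l_1y_{H},l_2y_{H})$ with $y_{H}=g_{H}z$ ranging over orbit points landing in $O_{H}$, and nothing places that limit in $B(z,\epsilon)$. Your proposed repair does not close the gap. If you route the transitivity through $v_H\in B(z,\delta_H)$ with $g_Hv_H\in O_H$, commutativity turns the data into: the pair $(l_1v_H,l_2v_H)$ (which \emph{is} near $z$) is $2\eta$-separated on the finite set $Hg_H$. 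A single finite separation set gives no lower bound on $D$ (or $\overline{D}$): $|Fg\cap Hg_H|\le|H|$, so the relevant averages tend to $0$ as the F{\o}lner sets grow. To get a positive bound you must let $H$ grow, but then either the pair changes with $H$ (and the separation, which occurs at group elements escaping to infinity, does not pass to the limit pair $(l_1z,l_2z)$ --- there is no equicontinuity to invoke, that being the very thing you are disproving), or you revert to the uncontrolled limit $(p,q)$. ``Resynchronizing the shifts at the limit'' is not an argument.

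The paper's resolution is to never take a limit of pairs. The bad pair is fixed once and for all as $(l_1x_0,l_2x_0)$, already inside $B(x_0,\delta')$ by recurrence. For each $n$ one takes $T_n=F_{m_n}\cap H$, uses independence and transitivity to find $y_n=g_nx_0$ in $\bigcap_{s\in T_n}(l_1s)^{-1}U_1\cap\bigcap_{s\in T_n}(l_2s)^{-1}U_2$, and then uses commutativity in the opposite direction from yours: $(g_ns)(l_ix_0)=(sl_i)(g_nx_0)\in U_i$. So the \emph{fixed} pair is separated on $g_nT_n\subseteq g_nF_{m_n}$, and since $\{g_nF_{m_n}\}$ is again a F{\o}lner sequence and $D$ is a supremum over all F{\o}lner sequences, one gets $D(l_1x_0,l_2x_0)\ge 2\delta_0\limsup_n|F_{m_n}\cap H|/|F_{m_n}|>0$ with no limit of pairs ever taken. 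A secondary error: you cannot ``pass to a cofinite subset'' of $J'=l_1^{-1}J\cap l_2^{-1}J$ to arrange $l_1J'\cap l_2J'=\emptyset$ (in $\mathbb{Z}$ with $l_1=0$, $l_2=1$, $J'=\mathbb{Z}$ this fails for every cofinite subset), and without that disjointness your coloring $\omega(l_is)=i$ is ill-defined. The paper instead extracts a maximal $H\subseteq J'$ whose doubletons $\{l_1s,l_2s\}$ are pairwise disjoint, covers $J'$ by $H\cup l_1^{-1}l_2H\cup l_2^{-1}l_1H$, and so retains ${\rm BD}^*(H)\ge{\rm BD}^*(J')/3$.
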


\begin{proof}
Since the group $G$ is abelian, the three concepts of Banach-, Weyl-mean and Besicovitch-$\mathcal{F}$-mean equicontinuous are equivalent. So
it suffices to prove there exists a transitive point $x_{0}$ which is not a Weyl-mean equicontinuous point by Proposition \ref{Prop7} and Theorem \ref{relation}.

As $G$ is abelian, thus the group $G$ is an amenable group.
Since $h_{{\rm {top}}}(X, G)>0$ and Theorem \ref{d02} and Definition \ref{b}, there exists IE pair $(x_{1},x_{2})\in IE_{2}(X,G)\setminus \bigtriangleup_{2}(X)$ satisfying for any nonempty open neighborhood $V_{1}\times V_{2}\ni (x_{1},x_{2})$, $\mathbf{A}'\doteq (V_{1}, V_{2})$ has positive independence density, i.e.,
$$
I(\mathbf{A}')=\inf_{F}\frac{\varphi_{\mathbf{A}'}(F)}{|F|}>0,
$$
where $\varphi_{\mathbf{A}'}(F)=\max\{|F\cap J|:J\quad\text{is an independence set for}\   \mathbf{A}'\}$ and $F$ ranges over all nonempty finite subset of $G$. Since $x_1\not=x_2$, we choose two open sets $U_{i}\, (i=1, 2)$ which are neighborhood of $x_{i}$ with
\begin{equation}\label{delta0}
  d(U_{1},U_{2})>2\delta_{0}>0 \quad {\text {and}}\quad I(\mathbf{A})=\inf_{F}\frac{\varphi_{\mathbf{A}}(F)}{|F|}>0.
\end{equation}
Here $\mathbf{A}=(U_{1}, U_{2})$. Thus, by Proposition \ref{c} and the equation \ref{ppt}, there exists an independence set $J$ for $\mathbf{A}$ such that
$$
{\rm {BD}}^{*}(J)\geq I(\mathbf{A})>0.
$$

Since $G\curvearrowright X$ is transitive and $X$ has no isolated points, by Propositions \ref{d} and \ref{e}, we know that the set Tran$(X,G)$ of points in $X$ with dense orbit and  the set Re$(X,G)$ of recurrent points are both dense $G_{\delta}$ sets of $X$. Applying Baire category theorem we have Tran$(X,G)\cap {\rm {Re}}(X,G)$ is also dense $G_{\delta}$ set of $X$, which means ${\rm {Tran}}(X,G)\cap {\rm {Re}}(X,G)\neq\emptyset$.

Let $x_{0}\in {\rm {Tran}}(X,G)\cap {\rm {Re}}(X,G)$. In what follows, we will show that $x_{0}$ is not a Weyl-mean equicontinuous point.

For each $\delta>0$, denote by
$$
G(x,B(x,\delta))=\{s\in G: sx\in B(x,\delta)\}.
$$
The cardinality of the set $G(x_{0},B(x_{0},\delta))$ is infinite because $x_{0}$ is a recurrent point.

Take $m_0\in \mathbb{N}$ satisfying
\begin{equation}\label{sss}
  \frac{1}{m_0}<\frac{\delta_0}{7}{\rm {BD}}^*(J)^2.
\end{equation}
Here $\delta_0$ is defined in \ref{delta0}. Recall that, from (\ref{xxx}),
\begin{equation*}
 \mathcal{E}_{1/m_0}=\bigg\{ x\in X : \exists \, \delta>0, \, \forall\, y, z\in B(x, \delta), \, D(y, z)<1/m_0 \bigg\}.
\end{equation*}
The rest of the proof we will prove the following assertion.

\vspace{0.1 cm}
\noindent {\bf {Claim. }}\   $x_0\not\in \mathcal{E}_{1/m_0}$.
\vspace{0.1 cm}

Suppose that
\begin{equation}\label{family01}
  x_0\in \mathcal{E}_{\frac{1}{m_0}}.
\end{equation}
Then there exists $\delta^*>0$ which depends on $x_0$ and $m_0$ such that
\begin{equation}\label{family02}
  D(y, z)<\frac{1}{m_0} \quad {\text {for all}} \quad  y, z\in B(x, \delta^*).
\end{equation}

Let
\begin{equation}\label{family03}
 0<\delta'<\min \{\delta_0,\   \delta^*\}.
\end{equation}

Since $x_{0}$ is a recurrent point, the cardinality of the set $G(x_{0},B(x_{0},\delta'))$  is infinite.
Recall that BD$^*(J)>0$ for the independent set $J$ for $\mathbf{A}$.
It follows from Lemma \ref{BD} that there are two distinct elements
\begin{equation}\label{vvv}
  l_{1}, l_{2}\in G(x_{0},B(x_{0},\delta'))
\end{equation}
such that
$$
{\rm {BD}}^{*}(l^{-1}_{1}J\cap l^{-1}_{2}J)\geq\frac{1}{2}{\rm {BD}}^{*}(J)^{2}.
$$

For each $g\in l^{-1}_{1}J\cap l^{-1}_{2}J$, there exist $g_{1},g_{2}\in J$ such that $g=l^{-1}_{1}g_{1}=l^{-1}_{2}g_{2}$. So we define a map
\begin{equation*}
 \varphi : l_1^{-1}J\cap l_2^{-1}J\rightarrow 2^J \quad \text {by}\quad \varphi(g)=\{g_1,g_2\}=\{l_{1}g,l_{2}g\}.
\end{equation*}
Given $s\in l^{-1}_{1}J\cap l^{-1}_{2}J $.  It is clear that the number of elements $g\in l^{-1}_{1}J\cap l^{-1}_{2}J$ such that $\varphi(g)\cap \phi(s)\neq \emptyset$ is at most three. Indeed, $g$ can only take $s$, $l_1^{-1} l_2 s$, or $l_2^{-1} l_1 s$.

Let $H$ be a maximal subset of $l^{-1}_{1}J\cap l^{-1}_{2}J$ with the property which is, for every pair $g,s\in H $ and $g\neq s $, $\varphi(g)\cap\varphi(s) =\emptyset$ (Zorn's Lemma guarantees the existence of the set $H$). Now we claim that
\begin{equation*}
 l^{-1}_{1}J\cap l^{-1}_{2}J \subseteq H\cup  l_1^{-1} l_2H\cup l_2^{-1} l_1H.
\end{equation*}
Otherwise, there exists an element $g^*\in l^{-1}_{1}J\cap l^{-1}_{2}J$ but $g^*\not\in H\cup  l_1^{-1} l_2H\cup l_2^{-1} l_1H$. So
\begin{equation}\label{zhub001}
 \varphi(g^*)\cap \varphi(h)=\emptyset \quad {\text {for all }}\quad h\in H.
\end{equation}
Indeed, if $\varphi(g^*)\cap \varphi(h_0)\not=\emptyset$ for some $h_0\in H$, then, by above argument, we know that $g^*=h_0$, $g^*=l_1^{-1} l_2 h_0$, or
$g^*=l_2^{-1} l_1 h_0$ which contradicts that $g^*\not\in H\cup  l_1^{-1} l_2H\cup l_2^{-1} l_1H$.
Hence, by (\ref{zhub001}), we deduce that the set $H\cup \{g^*\}$ satisfies the property that, for every pair $g, s\in H\cup \{g^*\}$ and $g\neq s $,  $\varphi(g)\cap\varphi(s)=\emptyset$. Noting $g^*\not\in H$ and $g^*\in l^{-1}_{1}J\cap l^{-1}_{2}J $, this contradicts that the set $H$ be a maximal subset $l^{-1}_{1}J\cap l^{-1}_{2}J$ with such property.

Hence, we get
\begin{equation}\label{zhub002}
 l^{-1}_{1}J\cap l^{-1}_{2}J \subseteq H\cup  l_1^{-1} l_2H\cup l_2^{-1} l_1H.
\end{equation}
According to Property \ref{right} and $G$ bing abelian, one has
\begin{equation*}
  {\rm{BD}}^*(H)={\rm{BD}}^*(l_1^{-1} l_2H)={\rm{BD}}^*(l_2^{-1} l_1H).
\end{equation*}
Combing with (\ref{zhub002}), it follows that
\begin{equation*}
 {\rm{BD}}^*(l^{-1}_{1}J\cap l^{-1}_{2}J)\leq {\rm{BD}}^*(H)+{\rm{BD}}^*(l_1^{-1} l_2H)+{\rm{BD}}^*(l_2^{-1} l_1H)=3{\rm{BD}}^*(H).
\end{equation*}
Therefore, we have
\begin{equation}\label{zhub003}
  {\rm {BD}}^{*}(H)\geq\frac{1}{3}{\rm {BD}}^{*}(l^{-1}_{1}J\cap l^{-1}_{2}J)\geq\frac{1}{6}{\rm {BD}}^{*}(J)^{2}.
\end{equation}

Recall that $G$ is an amenable group as $G$ is abelian and Theorem \ref{abiean}.
By (\ref{ppt}), we know that
\begin{equation}\label{zhub004}
  {\rm {BD}}^{*}(E)=\sup_{\mathcal{F}}\limsup_{n\rightarrow\infty}\frac{|E\cap F_{n}|}{|F_{n}|},
\end{equation}
where the supremum is taken over all F{\o}lner sequences $\mathcal{F}=\{F_{n}\}_{n\in \mathbb{N}}$ of $G$.
So, by (\ref{zhub003}) and (\ref{zhub004}), there is a F{\o}lner sequence $\{F_{n}\}$ of $G$ satisfying
$$
\limsup_{n\rightarrow\infty}\frac{|H\cap F_{n}|}{|F_{n}|}>\frac{1}{7}{\rm {BD}}^{*}(J)^{2}.
$$
Therefore, there exists a subsequence $\{m_n\}_{n=1}^{\infty}$ of $\mathbb{N}$ such that $m_n<m_{n+1}$, $m_n\geq n$ and
\begin{equation}\label{zhub005}
  \frac{|F_{m_n}\cap H|}{|F_{m_n}|}>\frac{1}{7}{\rm {BD}}^{*}(J)^{2}.
\end{equation}

We denote by $J_1=l_1 H$ and $J_2=l_2 H$. Since $H\subseteq l^{-1}_{1}J\cap l^{-1}_{2}J$, we immediately have $J_1\cup J_2 \subseteq J$.
Furthermore, we have $J_1 \cap J_2=\emptyset$. Indeed, if $J_1 \cap J_2\not=\emptyset$, then there are $h_1, h_2\in H$ such that
$l_1 h_1=l_2 h_2$. As $l_1\not=l_2$, it follows that $h_1\not=h_2$. Note that
\begin{equation*}
  \varphi(h_1)=\{l_1 h_1, l_2 h_1\}=\{l_2 h_2, l_2 h_1\} \quad {\text {and}}\quad \varphi(h_2)=\{l_1 h_2, l_2 h_2\}.
\end{equation*}
Thus $\varphi(h_1)\cap \varphi(h_2)\not=\emptyset$ and $h_1\not=h_2 \in H$ which contradicts the definition of $H$. Hence, $J_1 \cap J_2\not=\emptyset$.

Let $n\in \mathbb{N}$. The inequality (\ref{zhub005}) implies that $F_{m_n}\cap H\not=\emptyset$. Denote by
\begin{equation*}
  T_n=F_{m_n}\cap H.
\end{equation*}
Then we define the maps $\psi_i : T_n\rightarrow J_i (i= 1, 2)$ as follows:
\begin{equation*}
 \psi_i(s):=l_i s \quad {\text {for}}\quad s\in T_n.
\end{equation*}
It is easy to see that
\begin{equation*}
 \psi_i(T_n)\subseteq J_i\subseteq J\   (\text {for}\  i=1, 2) \quad {\text {and}}\quad \psi_1(T_n)\cap \psi_2(T_n)=\emptyset \quad {\text {as}}\quad J_1\cap J_2=\emptyset.
\end{equation*}
From the definition of the independence set of $J$ for $\mathbf{A}=(U_1, U_2)$ (see Definition \ref{indef01} and Definition \ref{indef02}), we get that
\begin{equation*}
 \left(\bigcap_{s\in T_n}(\psi_1(s))^{-1} U_1\right)\cap\left(\bigcap_{s\in T_n}(\psi_2(s))^{-1} U_2\right)\neq \emptyset,
\end{equation*}
that is,
\begin{equation*}
 \left(\bigcap_{s\in T_n}(\psi_1(s))^{-1} U_1\right)\cap\left(\bigcap_{s\in T_n}(\psi_2(s))^{-1} U_2\right)\quad {\text {is an nonempty open set of }} \  X.
\end{equation*}
Recall that $X=\overline{G x_{0}}$. Hence, we get
\begin{equation*}
 G x_{0} \cap \left(\bigcap_{s\in T_n}(\psi_1(s))^{-1} U_1\right)\cap\left(\bigcap_{s\in T_n}(\psi_2(s))^{-1} U_2\right)\neq \emptyset.
\end{equation*}
Choose a point $y_n\in X$ with
\begin{equation*}
 y_n\in G x_{0} \cap \left(\bigcap_{s\in T_n}(\psi_1(s))^{-1} U_1\right)\cap\left(\bigcap_{s\in T_n}(\psi_2(s))^{-1} U_2\right).
\end{equation*}
So
\begin{equation}\label{zz00}
  y_n=g_n x_0\quad {\text {for some  }} \  g_n\in G.
\end{equation}
Moreover, for each $g\in T_n$, since $G$ is abelian one has
\begin{equation}\label{zhub006}
  g (l_1 y_n)=(l_1 g)\, y_n=\psi_1(g) y_n\in U_1\quad {\text {and}}\quad g (l_2 y_n)=l_2 g\, y_n=\psi_2(g) y_n\in U_2.
\end{equation}
Combining with \ref{zz00} and $G$ being an abelian group, we get
\begin{equation*}
  (g_n g)(l_1 x_0)=(g l_1) y_n\in U_1\quad {\text {and}}\quad (g_n g)(l_2 x_0)=(g l_2) y_n\in U_2.
\end{equation*}
Therefore, we obtain that, for each $g\in T_n$,
\begin{equation*}
  g_n g\in G(l_{1} x_0, U_{1})\cap G(l_{2} x_0, U_{2}),
\end{equation*}
that is,
\begin{equation}\label{zhub007}
  g_n T_n=g_n (F_{m_n}\cap H)\subseteq G(l_{1} x_0, U_{1})\cap G(l_{2} x_0, U_{2}).
\end{equation}
Recall that $d(U_{1},U_{2})>2\delta_{0}$. Hence, one has
\begin{equation}\label{zz01}
  d(s l_1 x_0, s l_2 x_0)>2\delta_{0} \quad {\text {for every  }}\   s\in g_n T_n=g_n (F_{m_n}\cap H).
\end{equation}
Therefore, we have
\begin{equation}\label{ttt}
\begin{split}
\limsup_{n\rightarrow\infty} \frac{1}{|g_n F_{m_n}|} \sum_{s\in g_n F_{m_n}} d\left(s (l_1 x_0), s (l_2 x_0)\right)&
\geq \limsup_{n\rightarrow\infty} \frac{1}{|g_n F_{m_n}|} \sum_{s \in g_n (F_{m_n}\cap H)} d\left(s (l_1 x_0), s (l_2 x_0)\right)\\
& \geq \limsup_{n\rightarrow\infty} \frac{1}{|g_n F_{m_n}|}\cdot 2 \delta_0\cdot |g_n (F_{m_n}\cap H)|\\
& = \limsup_{n\rightarrow\infty} 2 \delta_0 \frac{|F_{m_n}\cap H|}{|F_{m_n}|}  \quad {\text {(by \ref{zhub005})}}\\
& > \frac{\delta_0}{7}{\rm {BD}}^{*}(J)^{2}.
\end{split}
\end{equation}
Denote by
\begin{equation*}
  \mathcal{F}'=\{g_n F_{m_n}\}_{n\in\mathbb{N}}.
\end{equation*}
Since $\{F_n\}_{n\in\mathbb{N}}$ is a F{\o}lner sequence of the abelian group $G$, $\mathcal{F}'$ is also a F{\o}lner sequence of $G$.
The inequality (\ref{ttt}) shows that
\begin{equation}\label{iii}
  D(l_1 x_0, l_2 x_0)\geq D_{\mathcal{F}'} (l_1 x_0, l_2 x_0)>\frac{\delta_0}{7}{\rm {BD}}^{*}(J)^{2}. \quad
  {\text {(see Definition \ref{mmm})}}
\end{equation}
Meanwhile, the above inequality implies that $l_1 x_0\not=l_2 x_0$.

Recall that, from (\ref{vvv}) and (\ref{family03}),
\begin{equation*}
  l_{1}, l_{2}\in G(x_{0},B(x_{0},\delta'))\quad {\text {and}}\quad \delta'<\delta^*,
\end{equation*}
that is,
\begin{equation}\label{bbb}
 l_1 x_0, \,l_2 x_0\in B(x_{0}, \delta^*).
\end{equation}

Combing with (\ref{iii}), (\ref{bbb}) and (\ref{sss}), one has
\begin{equation*}
 l_1 x_0, \,l_2 x_0\in B(x_{0}, \delta^*) \quad {\text {and}}\quad D(l_1 x_0, l_2 x_0)>\frac{1}{m_0}.
\end{equation*}
This contradicts the inequality (\ref{family02}). Hence we obtain that
\begin{equation}\label{family004}
  x_0\not\in\mathcal{E}_{\frac{1}{m_0}}.
\end{equation}

Recall that $\mathcal{E}$ denote the set of all Weyl-mean equicontinuous points of the dynamic system $(X, G)$.
From Proposition \ref{g} we know that
\begin{equation}\label{ccc}
 \mathcal{E}=\bigcap_{m=1}^{\infty}\mathcal{E}_{1/m}.
\end{equation}

By (\ref{family004}) and (\ref{ccc}), we get
\begin{equation*}
  x_0\not\in \mathcal{E}.
\end{equation*}
Therefore, $x_0$ is not a Weyl-mean equicontinuous point of $(X, G)$. So, $x_0$ is a Weyl-mean sensitive point of $(X, G)$.
By the assumption $G\curvearrowright X$ being transitive and Proposition \ref{Prop7}, we deduce that
$G\curvearrowright X$ is Weyl-mean sensitive.

Hence, the theorem is proved.
\end{proof}

\noindent {\bf {Proof of Theorem 1.1.}}

It follows from Theorem \ref{hto.}, Theorem \ref{relation} and Theorem \ref{abiean}.

\section{an application}

In order to get our result, we need to prepare for the following concepts and theorems.

\begin{definition}(\cite{KL})
By a p.m.p. (probability-measure-preserving) action of $G$, we mean
an action of $G$ on a standard probability space $(X, \mu)$ by measure-preserving transformations.
In this case, we will combine together the notation and simply write
$G\curvearrowright(X, \mu)$.
\end{definition}

Given an action $G\curvearrowright X$ on a compact metric space $X$, we say that a set $A\subseteq X$ is $G$-{\it {invariant}} if
$G A = A$, which is equivalent to $GA\subseteq A$. When the action is probability-measure preserving
and $A$ is a measurable set, we interpret $G$-invariance to mean $GA=A$
modulo a null set, i.e., $\mu(sA\triangle A)= 0$ for all $s\in G$.

\begin{definition}(\cite{KL})
The action $G\curvearrowright (X,\mu)$ is said to be {\it {ergodic}} if $\mu(A)=0$ or 1 for every $G$-invariant measurable set $A\subseteq X$.
\end{definition}

Any dynamical system with an amenable group action admits invariant probability measures and the
ergodic measures can be shown to be the extremal points of the set of invariant
probability measures (see, for example, the monographs \cite{DGS,PW1}). Let $\mathcal{M}(X)$, $\mathcal{M}_{G}(X)$ and $\mathcal{M}_{G}^{e}(X)$ denote the sets of all Borel probability measures on $X$, $G$-invariant regular Borel probability measures  on $X$ and ergodic measures in $\mathcal{M}_{G}(X)$, respectively.

\begin{prop}(\cite[\textit{Proposition 2.5}]{KL})\label{jjj}
For a p.m.p. action $G\curvearrowright(X, \mu)$ the following are equivalent:
\begin{enumerate}
\item  the action is ergodic,
\item $\mu(A)=0$ or 1 for every measurable set $A\subseteq X$ satisfying $sA = A$ for all $s\in G$ (i.e., G-invariance in the strict sense),
\item for all sets $A, B\subseteq X$ of positive measure there is an $s \in G$ such that $\mu(sA\cap B)>0$.
\end{enumerate}
\end{prop}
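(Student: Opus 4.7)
The statement is a classical equivalence, so the plan is to prove it by a cyclic chain of implications $(1)\Rightarrow(2)\Rightarrow(3)\Rightarrow(1)$, using only the countability of $G$ and basic measure-theoretic bookkeeping. The conceptual point is that conditions $(1)$ and $(2)$ differ only in the interpretation of ``$G$-invariant set'' (modulo a null set versus pointwise), and condition $(3)$ is a ``mixing-style'' restatement saying that no invariant set of intermediate measure can exist.

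For $(1)\Rightarrow(2)$ there is essentially nothing to show: a set that is strictly $G$-invariant is in particular $G$-invariant in the mod-null sense, so the ergodicity hypothesis directly gives $\mu(A)\in\{0,1\}$. The more substantial step is $(2)\Rightarrow(1)$, where given $A$ with $\mu(sA\triangle A)=0$ for all $s\in G$ I would pass to the strictly invariant modification
\begin{equation*}
A' \;=\; \bigcap_{t\in G} tA.
\end{equation*}
Since $G$ is countable, $A\setminus A'=\bigcup_{t\in G}(A\setminus tA)$ is a countable union of null sets, so $\mu(A\triangle A')=0$; and $sA'=\bigcap_{t\in G}(st)A=\bigcap_{u\in G}uA=A'$ for each $s\in G$. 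Hypothesis $(2)$ applied to $A'$ then yields $\mu(A)=\mu(A')\in\{0,1\}$.

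For $(2)\Rightarrow(3)$, given measurable $A,B$ of positive measure, I would pass to the strictly $G$-invariant saturation
\begin{equation*}
C \;=\; \bigcup_{s\in G} sA,
\end{equation*}
note that $\mu(C)\geq\mu(A)>0$, and invoke $(2)$ to conclude $\mu(C)=1$. Hence $\mu(B\cap C)=\mu(B)>0$, and countable additivity together with $B\cap C=\bigcup_{s\in G}(B\cap sA)$ forces $\mu(B\cap sA)>0$ for at least one $s$. Finally, for $(3)\Rightarrow(1)$ I would argue by contradiction: if a $G$-invariant (mod null) set $A$ satisfied $0<\mu(A)<1$, then both $A$ and $A^{c}$ would have positive measure, so by $(3)$ there would exist $s\in G$ with $\mu(sA\cap A^{c})>0$; but $sA\cap A^{c}\subseteq sA\triangle A$, which is null — a contradiction.

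The only place that genuinely requires care, and the step I expect to be the ``main obstacle,'' is the passage between the two notions of invariance in $(2)\Rightarrow(1)$, because the whole content of the equivalence hinges on the fact that the intersection $\bigcap_{t\in G}tA$ is both measurable and null-equivalent to $A$, which in turn uses that $G$ is countable. Without countability one would need a more delicate argument (e.g., an absolutely continuous measure on $G$ or a Fubini-type construction); here, however, the p.m.p.\ framework guarantees countability of $G$, so the elementary construction suffices and the entire proof reduces to routine manipulation of null sets.
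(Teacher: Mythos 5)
Your argument is correct and complete. Note, however, that the paper itself offers no proof of this proposition: it is quoted verbatim from Kerr--Li (\cite[Proposition 2.5]{KL}), so there is no in-paper argument to compare against. What you have written is the standard textbook proof, and every step checks out: $\bigcap_{t\in G}tA$ is genuinely strictly invariant because left multiplication by $s$ permutes $G$, it differs from $A$ by the countable union of the null sets $A\setminus tA$ (and is contained in $A$ since $e\in G$), the saturation $\bigcup_{s\in G}sA$ is strictly invariant of full measure, and the containment $sA\cap A^{c}\subseteq sA\triangle A$ closes the cycle. Your closing remark is also the right one: countability of $G$ is exactly what makes the null-set bookkeeping work, and it is guaranteed throughout this paper since $G$ is assumed countable.
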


Now, we recall the conceptions of amenable measure entropy (see \cite{HLZ} and \cite{KL}).

Let $G$ be a amenable group and $G\curvearrowright(X, \mu)$ be a p.m.p. action. Let $$\mathscr{P}=\{A_{1},A_{2},\cdots,A_{n}\}$$ be a finite partition of $X$ and $F$ be a nonempty finite subset of $G$. Set $\mathscr{P}^{F}$ for the join $\bigvee_{s\in F} s^{-1}\mathscr{P}$, $$h(\mathscr{P})=\inf_{F}\frac{1}{|F|}H(\mathscr{P}^{F})$$
where $F$ ranges over nonempty finite subsets of $G$ and $$H(\mathscr{P})=\sum_{i=1}^{n}-\mu(A_{i})\log\mu(A_{i}).$$
The entropy of the action $G\curvearrowright(X, \mu)$ is
$$h_{\mu}(X, G) = \sup_{\mathscr{P}} h(\mathscr{P})$$
where $\mathscr{P}$ ranges over all finite partitions of $X$.

The {\it {support}} of a measure $\mu\in \mathcal{M}(X)$, denoted by supp$(\mu)$, is the smallest closed subset $C$ of $X$ such that $\mu(C)=1$ (see \cite{KS}), that is,
\begin{equation*}
 {\rm {supp}}(\mu)= \bigcap_{\substack{K \text { is closed}\\
\text {and }\mu(K)=1}} K.
\end{equation*}

 The following fact is well known.

\begin{fact}\label{supp0001}
We have
\begin{equation*}
\begin{split}
{\rm {supp}}(\mu)&=\{x\in X: {\text {for every open neighborhood $U$ of $x$}} , \mu(U)>0\} \\
&=X\backslash \bigcup\{U\subset X:U {\text { is open and }} \mu(U)=0\}.
\end{split}
\end{equation*}
\end{fact}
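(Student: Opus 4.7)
The plan is to unpack the definition of $\mathrm{supp}(\mu)$ directly and verify the two equalities independently; neither requires any dynamics, only the defining property that $\mathrm{supp}(\mu)$ is the intersection of all closed sets of full measure.

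First I would dispatch the second equality, which is purely set-theoretic. Writing $S_{2}=\{x\in X:\mu(U)>0\text{ for every open neighborhood }U\text{ of }x\}$ and $S_{3}=X\setminus\bigcup\{U\subset X:U\text{ open and }\mu(U)=0\}$, the complement of $S_{2}$ is exactly the set of points admitting some open neighborhood of $\mu$-measure zero, which is the union in the definition of $S_{3}$. Hence $S_{2}=S_{3}$ with no further argument.

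For the first equality, I would prove the two inclusions separately. For $\mathrm{supp}(\mu)\subseteq S_{2}$, take $x\in\mathrm{supp}(\mu)$ and an open neighborhood $U$ of $x$, and assume toward a contradiction that $\mu(U)=0$. Then $K:=X\setminus U$ is closed with $\mu(K)=1$, so $K$ appears in the intersection defining $\mathrm{supp}(\mu)$; this forces $x\in K$, contradicting $x\in U$. For the reverse inclusion $S_{2}\subseteq\mathrm{supp}(\mu)$, take $x\in S_{2}$ and any closed set $K\subseteq X$ with $\mu(K)=1$; if $x\notin K$ then $X\setminus K$ would be an open neighborhood of $x$ with $\mu(X\setminus K)=0$, contradicting $x\in S_{2}$. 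Thus $x$ lies in every closed set of full measure, i.e.\ $x\in\mathrm{supp}(\mu)$.

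There is no genuine obstacle here: the argument is a direct two-line contrapositive in each direction, built only from the definition recalled just before the statement and the elementary fact that the complement of an open set is closed (and vice versa). The only mild subtlety worth flagging is that one does \emph{not} need separability or second countability of $X$ to obtain the stated equalities themselves; those hypotheses would only be invoked if one additionally wished to conclude $\mu(\mathrm{supp}(\mu))=1$, which the fact does not assert.
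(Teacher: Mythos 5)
Your proof is correct and complete: both inclusions for the first equality and the set-theoretic identification for the second follow exactly as you argue from the paper's definition of $\mathrm{supp}(\mu)$ as the intersection of all closed sets of full measure. The paper itself offers no proof (it merely labels the statement ``well known''), and your argument is the standard one that would be supplied; your closing remark that second countability is needed only for the stronger assertion $\mu(\mathrm{supp}(\mu))=1$, which the Fact does not claim, is an accurate and worthwhile clarification.
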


Topological entropy is related to measure entropy by the variational principle
which asserts that for a continuous map on a compact metric space the topological entropy equals the supremum of the
measure entropy taken over all the invariant probability measures. The following conclusion is the variational principle of the version of the amenable group action that we need in this paper.

\begin{theorem}\label{VPTE0001}(\cite[\textit{Theorem 5.2}]{HYZ})
(Variational principle of topological entropy)Let $G$ be an amenable group and $X$ be a compact metric space. Then
$$h_{{\rm {top}}}(G, X)=\sup_{\mu\in \mathcal{M}_{G}(X)}h_{\mu}(G,X)=\sup_{\mu\in \mathcal{M}_{G}^{e}(X)}h_{\mu}(G,X).$$
\end{theorem}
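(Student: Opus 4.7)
The plan is to establish the variational principle in the classical two-step form, plus the reduction to ergodic measures via the ergodic decomposition. First I would prove the easy direction $\sup_{\mu\in\mathcal{M}_G(X)} h_\mu(G,X)\leq h_{\mathrm{top}}(G,X)$. Fix $\mu\in\mathcal{M}_G(X)$ and a finite measurable partition $\mathscr{P}$. Using regularity of $\mu$, approximate $\mathscr{P}$ by a partition $\mathscr{Q}$ whose atoms have the same $\mu$-measure up to a small error and whose boundaries have $\mu$-measure zero, subordinate to an open cover of diameter less than some $\epsilon>0$. For any nonempty finite $F\subseteq G$, a maximal $(F,\epsilon)$-separated set $E\subseteq X$ gives a cover by Bowen balls, and the number of atoms of $\mathscr{Q}^F$ of positive measure is bounded above in terms of $|E|$. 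Since $H_\mu(\mathscr{Q}^F)\leq \log \#\mathrm{atoms}(\mathscr{Q}^F)$, dividing by $|F|$ and passing to the infimum along an increasingly invariant Følner sequence yields $h_\mu(\mathscr{P})\leq h_{\mathrm{top}}(G,X)+o(1)$. The error from swapping $\mathscr{P}$ for $\mathscr{Q}$ is controlled by the standard continuity estimate for $H_\mu$ in the Rokhlin partition metric.

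Second, for the hard direction, I would fix $\epsilon>0$ and a Følner sequence $\{F_n\}$, and for each $n$ choose a maximal $(F_n,\epsilon)$-separated set $E_n\subseteq X$ of cardinality $s_{F_n}(\epsilon)$. Define
\begin{equation*}
\nu_n := \frac{1}{|E_n|}\sum_{x\in E_n}\delta_x,\qquad \mu_n := \frac{1}{|F_n|}\sum_{g\in F_n} g_*\nu_n.
\end{equation*}
By compactness of $\mathcal{M}(X)$ in the weak-$*$ topology, extract a subsequential limit $\mu$. The Følner property and the same telescoping estimate used in the proof of Proposition \ref{mea.} show $\mu\in\mathcal{M}_G(X)$. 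Next choose a finite partition $\mathscr{P}$ with $\mathrm{diam}(\mathscr{P})<\epsilon$ and $\mu(\partial\mathscr{P})=0$. Because $E_n$ is $(F_n,\epsilon)$-separated, each atom of $\mathscr{P}^{F_n}$ meets $E_n$ in at most one point, so $H_{\nu_n}(\mathscr{P}^{F_n})=\log|E_n|$. Using concavity of $H(\cdot)$ in the measure variable and the subadditive identity for joins, one deduces, for every nonempty finite $F\subseteq G$, a lower bound of the form $H_\mu(\mathscr{P}^F)/|F|\gtrsim \limsup_n (\log|E_n|)/|F_n|-\mathrm{err}(F)$, where $\mathrm{err}(F)\to 0$ as $F$ becomes more and more invariant. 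Taking the infimum over $F$ and then $\epsilon\to 0$ gives $\sup_\mu h_\mu(G,X)\geq h_{\mathrm{top}}(G,X)$.

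Third, I would reduce to ergodic measures. Using the ergodic decomposition for p.m.p.\ amenable group actions, any $\mu\in\mathcal{M}_G(X)$ can be written as $\mu=\int_\Omega \mu_\omega\,d\mathbb{P}(\omega)$ with $\mu_\omega\in\mathcal{M}_G^e(X)$. The affine property of entropy, $h_\mu(G,X)=\int_\Omega h_{\mu_\omega}(G,X)\,d\mathbb{P}(\omega)$, which for amenable acting groups is established via the conditional-entropy formula of Ornstein--Weiss applied to the $\sigma$-algebra of $G$-invariant sets, then yields $h_\mu(G,X)\leq \sup_{\nu\in\mathcal{M}_G^e(X)} h_\nu(G,X)$. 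Combined with Step 1 this forces equality of all three quantities.

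The main obstacle is Step 2, specifically converting the cardinality identity $H_{\nu_n}(\mathscr{P}^{F_n})=\log|E_n|$ into a lower bound on $H_\mu(\mathscr{P}^F)/|F|$ for a single fixed finite $F$. The natural move of taking logs and averaging is disrupted because $F_n$ is not a power of $F$, so one cannot divide $F_n$ cleanly into translates of $F$. This is where one must invoke the Ornstein--Weiss $\varepsilon$-quasitiling machinery (or the Shulman tower theorem) to quasi-tile $F_n$ by translates of a prescribed finite set $F$ up to a set of small relative measure, controlling the overlap and the remainder terms. The auxiliary step of securing a partition $\mathscr{P}$ with $\mu(\partial\mathscr{P})=0$ and diameter $<\epsilon$ is a routine application of the Urysohn-type construction together with Fact \ref{supp0001}, but the quasitiling argument is the genuinely delicate ingredient.
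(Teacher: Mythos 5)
This theorem is not proved in the paper at all: it is imported verbatim as \cite[Theorem 5.2]{HYZ}, so there is no in-paper argument to compare against. Your outline is the standard Misiurewicz-type proof adapted to amenable actions (essentially the route taken in Kerr--Li and in Huang--Ye--Zhang), and as a sketch it is correct: the easy inequality via partitions with small boundary, the hard inequality via empirical measures $\mu_n=\frac{1}{|F_n|}\sum_{g\in F_n}g_*\nu_n$ supported on maximal separated sets, and the reduction to ergodic measures via affinity of $h_\mu$ over the ergodic decomposition. Two remarks on accuracy of the details. First, in Step 2 the quasitiling machinery you flag as the ``genuinely delicate ingredient'' is not actually needed there: the passage from $H_{\nu_n}(\mathscr{P}^{F_n})=\log|E_n|$ to a bound on $H_{\mu}(\mathscr{P}^F)/|F|$ for a fixed finite $F$ is done by the elementary subadditivity estimate $H_{\nu_n}(\mathscr{P}^{F_n})\leq \frac{1}{|F|}\sum_{s\in F_n}H_{\nu_n}(\mathscr{P}^{Fs})+o(|F_n|)$ (valid once $F_n$ is sufficiently $(F,\delta)$-invariant) followed by concavity of $H$ in the measure, which turns the average $\frac{1}{|F_n|}\sum_{s\in F_n}H_{s_*\nu_n}(\mathscr{P}^{F})$ into $H_{\mu_n}(\mathscr{P}^F)$; only the F{\o}lner property is used, not Ornstein--Weiss quasitiling. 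Second, in Step 1 the bound on the number of atoms of $\mathscr{Q}^F$ in terms of $|E|$ carries a hidden multiplicative factor of the form $M^{|F|}$ (each Bowen ball can meet several atoms of $\mathscr{Q}$ in each coordinate), contributing an additive $\log M$ to the entropy comparison that must be removed by the usual trick of replacing $\mathscr{P}$ by $\mathscr{P}^{F_m}$ and letting $F_m$ invariate. Neither point is a fatal gap, but both would need to be handled in a complete write-up.
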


As an application of our main result, we have the following result.

\noindent {\bf {Theorem 1.2}} {\it {
Let $G$ be a countable abelian group, $X$ be a compact metric space and $G\curvearrowright X$ be a continuous action. If $G\curvearrowright X$ is Banach-mean equicontinuous, then
\begin{equation*}
 h_{{\rm {top}}}(X, G)=0.
\end{equation*}
}}
\begin{proof}
Let $\mu$ be an ergodic invariant measure on the action $G\curvearrowright X$. Denote by $X_0={\rm {supp}}(\mu)$ the support of the ergodic invariant measure $\mu$. It is clear that $X_0$ is a $G$-invariant closed subset of $X$ and $G\curvearrowright(X_0, \mu)$ is also ergodic. Moreover, we have $h_{\mu}(X, G)=h_{\mu}(X_0, G)$.

In what follows we show that $h_{\mu}(X_0, G)=0$.

Let $U$, $V$ be any pair nonempty open sets of $X_0$, then $\mu(U)\mu(V)>0$ since supp$(\mu)=X_0$ and Fact \ref{supp0001}. Thus there is an element $s\in G$ such that $\mu(U\cap sV)>0$ by $G\curvearrowright(X_0, \mu)$ being ergodic and Proposition \ref{jjj}, that is,
\begin{equation*}
 U\cap sV\not=\emptyset.
\end{equation*}
Note $X_0$ be a compact metric space.
Hence the action $G\curvearrowright X_0$ is topological transitive.

Now we divide two cases to complete our proof.

\noindent{\it Case 1. }\  $X_0$ has no isolated points.

Since $G\curvearrowright X$ is Banach-mean equicontinuous, it is clear that $G\curvearrowright X_0$ is also Banach-mean equicontinuous.
By Theorem \ref{maintheorem1} and $X_0$ having no isolated points, we get
\begin{equation*}
  h_{{\rm {top}}} (X_0, G)=0.
\end{equation*}
Note that $\mu|_{X_0}$ is an ergodic measure of $G\curvearrowright X_0$. Then by Theorem \ref{VPTE0001}, we obtain
\begin{equation*}
 h_{\mu}(X_0, G)=0.
\end{equation*}
Therefore, we have
\begin{equation*}
  h_{\mu}(X, G)=h_{\mu}(X_0, G)=0.
\end{equation*}
Recall that $\mu$ be any ergodic invariant measure on the action $G\curvearrowright X$. Again applying Theorem \ref{VPTE0001}, we deduce that
\begin{equation*}
  h_{{\rm {top}}} (X, G)=0.
\end{equation*}

\noindent{\it Case 2. }\  $X_0$ has isolated points.

Suppose that $x_0\in X_0$ is an isolated point of $X_0$. So the single point set $\{x_0\}$ is an open set of $X_0$. Let $V\subseteq X_0$ be any open set. Since the action $G\curvearrowright X_0$ is topological transitive, there is $s\in G$ such that $s x_0\in V$. This fact implies the orbit of $x_0$ is dense in $X_0$, that is, $\overline{G x_0}=X_0$.

Note that $x_0\in {\rm {supp}}(\mu)$ and the single point set $\{x_0\}$ is an open set. Thus one has $\mu(\{x_0\})>0$. Since $\mu(X_0)=1$ we deduce that
the cardinality of the set $G x_0$ is finite (i.e. $|G x_0|<\infty$). Combining with $\overline{G x_0}=X_0$, we get that the cardinality of the space $X_0$ is finite (i.e. $|X_0|<\infty$). By the definition of topological entropy, it is easy to see that
\begin{equation*}
  h_{{\rm {top}}} (X_0, G)=0.
\end{equation*}
In the following, with an similar argument as in Case 1, we can obtain
\begin{equation*}
  h_{{\rm {top}}} (X, G)=0.
\end{equation*}

Hence the theorem is proved.

\end{proof}


\bigskip \noindent{\bf Acknowledgement}.
 The authors are very grateful Prof. Hanfeng Li for his generous sharing of his knowledge
 and his ideas about the topic with him. The authors would also like to thank Prof. Jian Li for his
 helpful suggestions.


\bibliographystyle{amsplain}

\end{document}